\theoremstyle{plain}
\newtheorem{theorem}{Theorem}[section]
\newtheorem{lemma}{Lemma}[section]
\newtheorem{corollary}{Corollary}[section]
\theoremstyle{definition}
\newtheorem{definition}{Definition}[section]
\newtheorem{example}{Example}[section]
\newtheorem{remark}{Remark}[section]
\theoremstyle{definition}
\numberwithin{equation}{section}
\newcommand{\al}{\alpha}
\newcommand{\De}{\Delta}
\newcommand{\ep}{\varepsilon}
\newcommand{\tet}{\theta}
\newcommand{\D}{\mathcal D}
\newcommand{\R}{\mathbb R}
\newcommand{\N}{\mathbf N}
\newcommand{\lp}{\left(}
\newcommand{\rp}{\right)}
\newcommand{\lsp}{\left[}
\newcommand{\rsp}{\right]}
\begin{document}
\title[ COMPACTNESS AND $\mathcal{D}$-BOUNDEDNESS IN MENGER'S 2-PROBABILISTIC NORMED SPACES]{COMPACTNESS AND $\mathcal{D}$-BOUNDEDNESS IN MENGER'S 2-PROBABILISTIC NORMED SPACES}

\author[P.K. Harikrishnan]{P.K. Harikrishnan}

\address{Department of   Mathematics,  Manipal Institute of Technology, Manipal University, Manipal-576104, Karnataka, India.}

\email{pk.harikrishnan@manipal.edu, pkharikrishnans@gmail.com }

\author[B. Lafuerza--Guill\'{e}n]{Bernardo Lafuerza--Guill\'{e}n}

\address{Departamento de Matem\'{a}tica Aplicada y Estad\'{\i}stica, Universidad de Almer\'{\i}a\\
	Al\-me\-r\'{\i}a, Spain}

\email{blafuerz@ual.es, blafuerza@gmail.com}

\author[K.T. Ravindran]{K.T. Ravindran}
\address{Department of Mathematics, Payyanur College, Kannur University, Kannur, India}

\email {drktravindran@gmail.com}
\begin{abstract}
The idea of convex sets and various related results in 2-Probabilistic
normed spaces were established in \cite{HR}. In this paper, We obtain the concepts
of convex series closedness, convex series compactness, boundedness and their
interrelationships in Menger's 2-probabilistic normed space. Finally, the idea
of $ \mathcal{D}- $ Boundedness in Menger's 2-probabilistic normed spaces and Menger's
Generalized 2-Probabilistic Normed spaces are discussed.
\end{abstract}

\subjclass[2000]{ Primary 46S50}

\keywords{ Linear 2-normed spaces, Probabilistic normed spaces, Convexity. }

\maketitle

\section{Introduction and Preliminaries}

Probabilistic functional analysis has emerged as one of the important mathematical disciplines in view of its need in dealing with probabilistic models in applied problems. Probabilistic functional analysis was first initiated by Prague school of probabilistics led by Spacek and Hans in the 1950's. Probabilistic Normed spaces were introduced by \'Serstnev and its new definition was proposed by C.Alsina, B.Schwerier and A.Sklar (\cite{AlsSchw83,ASS97,SS83,SS60}). General theory of Probabilistic Metric spaces and Probabilistic Normed spaces can be read in (\cite{CI89,M, LRS99, LSG,SS60,SamSem08, SamSem09}).An important family of Probabilistic Metric spaces are Probabilistic Normed spaces. The theory of probabilistic normed spaces is important as a generalization of deterministic results of linear normed spaces and in the study of random operator equations. The concept of 2-Probabilistic normed spaces has been introduced by Fatemeh Lael and Kourosh Nourouzi (\cite{FK}).

Let $ X $ be a real linear space of dimension greater than 1. We recall the definition of a 2-norm on $X\times X$ \cite{RFY}.
\begin{definition}(\cite{RFY}) Let $ X $ be a real linear space of dimension greater than 1 and $\|.,.\|$  be a real valued function on $X\times X$ satisfying the properties, for all $ x,y,z  \in X $ and $ \alpha \in \R $
\begin{enumerate}[(N1)]
\item $\|x,y\|$ = 0 if and only if $ x $ and $ y $ are linearly dependent

\item $\|x,y\| = \|y,x\|$

\item $\|\alpha x,y\| = |\alpha|\|y,x\|$

\item $\|x+y,z\|\leq\|x,z\|+ \|y,z\|$
\end{enumerate}
then the function $\|.,.\|$ is called a 2-norm on $ X $. The pair (X,$\|.,.\|$)  is called a linear 2- normed space.
\end{definition}
It is immediate from the definition that 2-norms are non-negative and $\|x,y+\alpha x\|$ = $\|x,y\|$ for all $x$ and $y$ in $X$ and for every $\alpha$ in $\mathbb{R}$.

The most standard example for a linear 2-normed space is $X = \mathbb{R}^{2}$ equipped with the following 2-norm,
\begin{center}
$ \|x_{1},x_{2}\| = abs \left( \left\vert
                            \begin{array}{cc}
                              x_{11} & x_{12}\\
                              x_{21} & x_{22} \\
                            \end{array}
                          \right\vert \right) $
\end{center}
~~{ where}~~ $x_{i}=( x_{i1}, x_{i2})$~~{ for}~~ $i=1,2$ and "\textit{abs}" stands for absolute value of.

\begin{definition}(\cite{RFY})Let $(X,\|.,.\|)$  be a linear 2-normed space, then a map $T:X\times X\rightarrow \mathbb{R}$  is called a 2- linear functional on $X$ whenever for every $x_{1},x_{2},y_{1},y_{2} \in X $ and $\alpha,\beta \in \mathbb{R}$

 (i) $T(x_{1} + x_{2},y_{1} + y_{2}) = T(x_{1},y_{1}) + T(x_{1},y_{2})+T(x_{2},y_{1})+ T(x_{2},y_{2})$

(ii) $T(\alpha x_{1},\beta y_{1}) = \alpha \beta T(x_{1},y_{1})$

hold.
\end{definition}

Every linear 2-normed space is a locally convex topological vector space (briefly TVS). In fact, for a fixed $b\in X$, $P_{b}(x)= \|x,b\|$ for $x \in X$ is a semi norm  and the family $\{P_{b};b \in X\}$ of semi norms generates a locally convex topology on $X$.

The geometrical meaning of a 2-norm $\|x,y\|$ is that of the area of the parallelogram spanned by the vectors $ x $ and $ y $. Also, it is clear that 2-norm  $\|.,.\|$ is a continuous 2-linear functional in the linear 2-normed space $(X,\|.,.\|)$ (\cite{RFY} page 54).

 \begin{definition}(\cite{ASS97})
 A distribution function (= d.f.) is a function $F:\overline{\R}\rightarrow [0,1]$ that is non decreasing and left-continuous on $\R$; moreover, $F(-\infty)=0$ and $F(+\infty)=1$.Here $\overline{\R}=\R \cup \{-\infty,+\infty \}$. The set of all the d.f.'s will be denoted by $\Delta$ and the subset of those d.f.'s called distance d.f.'s, such that $F(0)=0$, by $\Delta^+$. We shall also consider $\mathcal{D}$ and $\mathcal{D}^+$, the subsets of $\Delta$ and $\Delta^+$, respectively, formed by the \textit{proper} d.f.'s, i.e., by those d.f.'s $F\in \Delta$ that satisfy the conditions
 $$\lim_{x\rightarrow -\infty}F(x)=0\quad \textrm{and}\quad \lim_{x\rightarrow +\infty}F(x)=1.$$
 The first of these is obviously satisfied in all of $\Delta^+$ since, in it, $F(0)=0$.

\end{definition}
For every $ a \in \R, \varepsilon_{a}$ is the d.f. defined by
$$
\varepsilon_a(t):=\begin{cases} 0, & t\leq a,\\
1,  & t>a.\end{cases}
$$
The set $ \Delta $, as well as its subsets, can partially be ordered by the usual pointwise order; in this order, $ \varepsilon_{0} $ is the maximal element in $ \Delta^{+} $.
We recall below for the reader's convenience the definition of a PN space;
\begin{definition} (\cite{AlsSchw83}, \cite{HL})\label{D:PN} A \textit{Probabilistic Normed} space is a quad\-ruple $(V,\nu,\tau,\tau^*)$,
where $V$ is a real linear space, $\tau$ and $\tau^*$ are continuous
triangle functions and the mapping $\nu:V\to\De^+$ satisfies, for all $p$ and $q$ in $V$, the conditions
\begin{enumerate}[(N1)]
\item $\nu_p=\ep_0$ if, and only if, $p=\tet$ ($\tet$ is the null vector in $V$);
\item $\forall p\in V\quad\nu_{-p}=\nu_p$;
\item $\nu_{p+q}\ge\tau\lp\nu_p,\nu_q\rp$;
\item $\forall\,\al\in\lsp 0,1\rsp\quad\nu_p\le\tau^*\lp\nu_{\al\,p},\nu_{(1-\al)\,p}\rp$.
\end{enumerate}
  If $\tau=\tau_T$ and
$\tau^*=\tau_{T^*}$ for some continuous $t$--norm $T$ and its $t$--conorm $T^*$ then $(V,\nu,\tau_T,\tau_{T^*})$ is
denoted by $(V,\nu,T)$ and is said to be a \textit{Menger} PN space. Briefly a \textit{t}-norm is any binary operation \textit{T} on $ [0,1] $ that is communicative, associative, increasing in each place and such that $ T(a,1)=a $ for every $ a\in[0,1] $. A \textit{t}-conorm  $ T^{*} $ is associated with every \textit{t}-norm $ T $; it is defined by $ T*(x,y):=1-T(1-x,1-y)$.
\end{definition}

\begin{definition}(\cite{FK})
A pair $ (X,\nu) $ is called a Menger's 2- Probabilistic Normed space (briefly Menger's 2PN space) (see \cite{SS83, RFY})  if $ X $ is a real vector space of $ dim X >1 $, $\nu$ is a mapping from $ X \times X $ into $ \D $(for each  $ x \in X $, the distribution function $ \nu(x,y) $ is denoted by $ \nu_{x,y} $ and $ \nu_{x,y}(t) $ is the value of $ \nu_{x,y} $ at  $ t \in R $ ) satisfying the axioms:
\begin{itemize}
\item[(A1)] $ \nu_{x,y}(0)=0 $ for all $ x,y \in X $

\item[(A2)] $ \nu_{x,y}(t)=1 $ for all $ t>0 $ if, and only if $x,y$ are linearly dependent.

\item[(A3)] $ \nu_{x,y}(t)=\nu_{y,x}(t) $ for all $ x,y \in X $

\item[(A4)] $ \nu_{\alpha x,y}(t)= \nu_{x,y}(\frac{t}{\vert \alpha \vert}) $ for all $ \alpha \in R\setminus\{ 0 \} $ and for all $ x,y \in X $

\item[(A5)] $ \nu_{x+y,z}(s+t) \geq min \lbrace \nu_{x,z}(s),\nu_{y,z}(t) \rbrace $ for all $ x,y,z \in X $ and $ s,t \in R $.
\end{itemize}

We call the mapping $ (x,y) \rightarrow \nu_{x,y} $  a 2-probabilistic norm (briefly 2-P norm) on $ X $.
\end{definition}
The geometrical meaning of 2-P norm on $ X $ is $\nu _{x,y} (t)= P\{ (x,y): \|x,y\| < t\}$, which is the probability of the set of all $(x,y)\in X \times X$ such that the area of the parallelogram spanned by the vectors x an y is less than t.

From the axioms A1 and A2 of the above definition, it is clear that

$$ \nu_{x,y}(t)= \varepsilon_0(t)\Leftrightarrow \textrm{$ x $ and $ y $ are linearly dependent}$$

 From a probabilistic point of view this means that for every $t>0$
 $$P\{\|x,y\| < t\}=1 \Leftrightarrow x=\lambda y, \lambda\neq 0.$$
 If one of the points $x,y$ is $\theta$ then $ x $ and $ y $ are linearly dependent and $\|x,y\|=0$.

\begin{example}(\cite{FK})
Let $ (X,\Vert .,.\Vert ) $  be a 2-normed space. Every 2-norm induces a 2-P norm on $ X $ as follows:

 $$
\nu_{x,y}(t):=\begin{cases} 0, & t\leq 0,\\
   \frac{t}{t + \Vert x,y \Vert},  & t>0.\end{cases}
   $$
This 2-P norm is called the standard 2-P norm.

\end{example}
\begin{example}(\cite{FK})
Let $(X, \|\cdot, \cdot\|)$ be a 2-normed space.  One defines for every $x,y \in X$ and $t\in\R$
the following 2-P norm
$$
\nu_{x, y}(t):=\begin{cases} 0, & t\leq \|x,y\|,\\
1,  & t>\|x,y\|.\end{cases}
$$
Then $(X,\nu)$ is a 2-PN space.
\end{example}
\begin{definition}(\cite{FK})
Let $ (X,\nu) $ be a Menger's 2-PN space, and $ (x_{n}) $ be a sequence of $ X $. Then the sequence  $ (x_{n})$ is said to be convergent to x if $ \displaystyle\lim_{n\rightarrow\infty}\nu_{x_{n}-x,z}(t) =1$, i.e. for all $ z \in X $ and $ t>0$, and $\alpha \in (0,1)$, $\exists n_0\in\N$ such that for every  $n>n_0$, one has $\nu_{x_n-x,z}(t)>1-\alpha$.
\end{definition}

\begin{definition}(\cite{FK})
Let $ (X,\nu) $ be a Menger's 2-PN space then a sequence  $ ( x_{n}) \in X $ is said to be a Cauchy sequence if $ \displaystyle\lim_{n\rightarrow\infty}\nu_{x_{m}-x_{n},z}(t) =1$ for all $ z \in X , t>0 $ and $ m>n $.
\end{definition}

\begin{definition}(\cite{FK})
A Menger's 2-PN space  is said to be complete if every Cauchy sequence in $ X $ is convergent to a point of $ X $.

	A Complete Menger's 2-PN space is called Menger's 2-P Banach space.
\end{definition}

\begin{definition}(\cite{FK}) Let $ (X,\nu) $ be a Menger's 2-PN space, $ E $ be a subset of $ X $ then the closure of $  E $ is $\overline{E}=\{x\in X: \exists (x_{n})\subset E / x_{n}\rightarrow x \rbrace$, i.e. for $ e,\in X , \alpha \in (0,1)$ and $ r>0$, $x\in \overline{E}$: there exists $n_0\in\N $ such that for every $n>n_0$ one has $\nu_{x-x_n,e}(r)\geq \alpha$.

We say, $ E $ is sequentially closed if $E=\overline{E}$.
\end{definition}

\begin{definition}(\cite{FK}) Let $  E $ be a subset of a real vector space $ X $  then $ E $ is said to be a convex set if $ \lambda x + (1-\lambda)y \in E$ for all $ x,y \in E $ and $ 0 < \lambda <1 $.
\end{definition}

\begin{definition}(\cite{FK}) Let $(X,\nu)$ be a Menger's 2-PN space, for $ e,x \in X , \alpha \in (0,1)$ and $ r>0 $ we define the locally ball by,
\begin{center}
$ B_{e,\alpha}\left[ x,r\right] =\lbrace y \in X: \nu_{x-y,e}(r)\geq \alpha \rbrace $
\end{center}
\end{definition}

\begin{definition}(\cite{FK}) Let $(X,\nu)$ and $ (Y,\nu^{'})$ be two Menger's 2-PN spaces, a mapping $ T:X \rightarrow Y $ is said to be sequentially continuous if $ x_{n}\rightarrow x $ implies $ T(x_{n})\rightarrow T(x)$.
\end{definition}

\begin{definition}(\cite{PS2008}) Let $X, Y$ be two real linear spaces of dimension greater than one and let $\nu$ be a function defined on the cartesian product  $X\times Y$ into $\Delta ^+$ satisfying the
following properties:

  \begin{enumerate}[(MG2P-N1)]
 \item $\nu_p(0)=0$ for all $(x,y) = p\in X\times Y$.
 \item $\nu_{x,y}(t)=1$ for all $t>0$ if, and only if $\nu_{x,y}=\varepsilon_0$.
 \item $\nu_{x,y}(t)=\nu_{y,x}(t)$ for all $(x,y)\in X\times Y$.
 \item $\nu_{\alpha x,y}(t)=\nu_{x, \alpha y}(t)= \nu_{x,y}\left(\frac{t}{\|\alpha\|}\right)$ for every $t>0$, $\alpha \in \R\setminus \{0\}$ \\
     and $(x,  y)\in X\times Y$.
  \item $\nu_{x + y, z}\geq \min\{\nu_{x,z}, \nu_{y,z}\}$ for every $x,y \in X$ and $z\in Y$.
  \item $\nu_{x, y + z}\geq \min\{\nu_{x,y}, \nu_{x,z}\}$ for every $x\in X$ and $y,z\in Y$.
 \end{enumerate}

\bigskip

The function $\nu$ is called a Menger generalized 2-probabilistic norm on $X\times Y$ and the pair $(X\times Y, \nu)$ is called a Menger generalized 2-probabilistic normed space (briefly MG2PN space).

\end{definition}

\begin{definition}(\cite{PS2008})
Let $A \times B$ be a non empty subset of a MG2PN space $(X\times Y,\nu)$ then its probabilistic radius $R_{A\times B}$ is defined by
$$
R_{A \times B}(x):=\begin{cases} l^{-}\varphi_{A \times B}(x), & x\in [0,+\infty),\\
1,  & x=\infty.\end{cases}
$$
where $\varphi_{A \times B}(x):= inf \lbrace \nu_{x,y}(x): x\in A,y\in B \rbrace $
\end{definition}

\begin{definition} (\cite{PS2008})
Let $A \times B$ be a non empty subset of a  MG2PN space $(X\times Y,\nu)$ then $A\times B$ is said to be:
\begin{enumerate}
\item \textit{Certainly bounded}, if $R_{A\times B}(x_{0})=1$ for some $x_{0}\in (0,\infty)$.
\item \textit{Perhaps bounded}, if one has $R_{A\times B}(x)<1$ for every $x\in (0,\infty)$ and $l^{-1}R_{A\times B}(+\infty)=1$.
\item \textit{Perhaps unbounded},if  $R_{A\times B}(x_{0})>0$ for some $x_{0}\in (0,\infty)$ and $l^{-1}R_{A\times B}(+\infty)\in (0,1)$.
\item \textit{Certainly unbounded}, if $l^{-1}R_{A\times B}(+\infty)=0$.
\end{enumerate}

$A$ is said to be $\mathcal{D}$-Bounded if either (1) or (2) holds.
\end{definition}
\begin{theorem} (see \cite{LG2001}, \cite{PS2008} )  Let $(X\times Y,\nu)$ and $A\times B$  be  a   Menger's G2PN space and a  $\mathcal{D}$-bounded subset of $X\times Y$ respectively. The set
$\alpha A\times B:=\{(\alpha p, q): p\in A, q \in B\}$ is also  $\mathcal{D}$-bounded for every fixed $\alpha \in R\setminus \{0\}$ if   $\mathcal{D}^+$ is a closed set under the t-norm $ M $.
\end{theorem}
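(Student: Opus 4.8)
The plan is to reduce the entire statement to a single scaling identity for the probabilistic radius and then read off $\mathcal{D}$-boundedness from it. Writing $t$ for the real variable (to avoid the notational clash with the points of $A$ and $B$ in the paper's definition), I would first record that the homogeneity axiom (MG2P-N4) gives $\nu_{\alpha p, q}(t)=\nu_{p,q}\!\left(t/|\alpha|\right)$ for every $p\in A$, $q\in B$, every $t>0$, and every fixed $\alpha\in\R\setminus\{0\}$. Since $\alpha A\times B$ consists precisely of the pairs $(\alpha p,q)$, taking the infimum over $p\in A$, $q\in B$ yields $\varphi_{\alpha A\times B}(t)=\varphi_{A\times B}\!\left(t/|\alpha|\right)$, the two infima being over the same index set.

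Next I would pass from $\varphi$ to the radius $R=l^{-}\varphi$. Because $|\alpha|>0$, the map $t\mapsto t/|\alpha|$ is a continuous strictly increasing bijection of $[0,\infty)$ onto itself, and such a map commutes with the left-limit regularization $l^{-}$; hence $R_{\alpha A\times B}(t)=R_{A\times B}\!\left(t/|\alpha|\right)$ for all $t\in[0,\infty)$, while both radii take the value $1$ at $+\infty$ by definition. This identity is the engine of the proof. The hypothesis that $\mathcal{D}^+$ is closed under the $t$-norm $M$ enters precisely here: it guarantees that the infimum defining $\varphi_{\alpha A\times B}$ (an operation governed by the minimum) again produces a proper distance distribution function, so that $R_{\alpha A\times B}\in\mathcal{D}^+$ and the four-fold boundedness classification is legitimately applicable to $\alpha A\times B$.

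Finally I would check that each of the two defining conditions of $\mathcal{D}$-boundedness is preserved under $t\mapsto t/|\alpha|$. If $A\times B$ is certainly bounded, say $R_{A\times B}(t_0)=1$ with $t_0\in(0,\infty)$, then $R_{\alpha A\times B}(|\alpha|\,t_0)=R_{A\times B}(t_0)=1$ with $|\alpha|\,t_0\in(0,\infty)$, so $\alpha A\times B$ is certainly bounded. If instead $A\times B$ is perhaps bounded, then for every $t\in(0,\infty)$ we have $t/|\alpha|\in(0,\infty)$, whence $R_{\alpha A\times B}(t)=R_{A\times B}\!\left(t/|\alpha|\right)<1$, and as $t\to+\infty$ the argument $t/|\alpha|\to+\infty$, so $\lim_{t\to+\infty}R_{\alpha A\times B}(t)=\lim_{s\to+\infty}R_{A\times B}(s)=1$; thus $\alpha A\times B$ is perhaps bounded. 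In either case $\alpha A\times B$ is $\mathcal{D}$-bounded, as required. The only genuinely delicate point I anticipate is the interchange in the second paragraph of the left-limit operation $l^{-}$ with the rescaling of the argument, together with the verification that closure of $\mathcal{D}^+$ under $M$ really does keep the rescaled radius inside $\mathcal{D}^+$; the boundedness bookkeeping itself is then immediate.
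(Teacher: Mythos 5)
The paper does not actually prove this theorem: it is stated in the preliminaries as a quoted result from \cite{LG2001} and \cite{PS2008}, so there is no in-paper argument to compare yours against. Judged on its own, your proof is correct and is the natural one: axiom (MG2P-N4) gives the exact homogeneity $\nu_{\alpha p,q}(t)=\nu_{p,q}(t/|\alpha|)$, the infimum over the common index set $A\times B$ transfers this to $\varphi$, the left-limit regularization $l^{-}$ commutes with the strictly increasing continuous rescaling $t\mapsto t/|\alpha|$, and the resulting identity $R_{\alpha A\times B}(t)=R_{A\times B}(t/|\alpha|)$ immediately preserves both the ``certainly bounded'' and ``perhaps bounded'' alternatives. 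All of these steps are sound.

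The one point you should fix is your second paragraph's claim that the closure of $\mathcal{D}^{+}$ under the $t$-norm $M$ ``enters precisely here'' to make $\varphi_{\alpha A\times B}$ a proper distance d.f. That is not right: the binary operation $M$ never appears anywhere in your argument, and closure of $\mathcal{D}^{+}$ under a binary $t$-norm says nothing about an infimum taken over an arbitrary (possibly infinite) family. Moreover, the fact that $R_{\alpha A\times B}\in\mathcal{D}^{+}$ is exactly the conclusion you establish in your final paragraph from the scaling identity together with the $\mathcal{D}$-boundedness of $A\times B$; it does not need to be secured in advance by the closure hypothesis. In this Menger G2PN setting the homogeneity axiom is exact, so the hypothesis on $M$ is simply not used; since the theorem is phrased as a sufficient condition, carrying an unused hypothesis does no harm, but you should either delete that sentence or state plainly that the hypothesis is not needed for this particular statement (it is genuinely used in the companion results about sums such as $(A+C)\times B$, where $R_{(A+C)\times B}\ge M\{R_{A\times B},R_{C\times B}\}$ and one must know that $M$ of two elements of $\mathcal{D}^{+}$ stays in $\mathcal{D}^{+}$).
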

\begin{theorem}\cite{HR} Every locally ball in Menger's 2-PN space is Convex.
\end{theorem}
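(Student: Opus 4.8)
The plan is to unwind the definition of convexity directly against the definition of the locally ball and reduce everything to the homogeneity axiom (A4) and the triangle axiom (A5) of the Menger 2-PN space. Fix $e,x\in X$, $\alpha\in(0,1)$ and $r>0$, and take two arbitrary points $y_1,y_2\in B_{e,\alpha}\lsp x,r\rsp$ together with a scalar $\la\in(0,1)$. By definition this means $\nu_{x-y_1,e}(r)\ge\alpha$ and $\nu_{x-y_2,e}(r)\ge\alpha$, and the goal is to verify that the convex combination $z:=\la y_1+(1-\la)y_2$ again satisfies $\nu_{x-z,e}(r)\ge\alpha$, i.e. $z\in B_{e,\alpha}\lsp x,r\rsp$.

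First I would rewrite the first argument of $\nu$ in a form adapted to the triangle axiom. Writing $x=\la x+(1-\la)x$ and regrouping gives $x-z=\la(x-y_1)+(1-\la)(x-y_2)$, so the first slot of $\nu_{x-z,e}$ is a sum of two scaled vectors. This is exactly the decomposition that lets axiom (A5) act.

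Next I would apply (A5) with the matching splitting $r=\la r+(1-\la)r$: taking $s=\la r$ and $t=(1-\la)r$ yields
$$\nu_{x-z,e}(r)\ge\min\lbr\nu_{\la(x-y_1),e}(\la r),\,\nu_{(1-\la)(x-y_2),e}((1-\la)r)\rbr.$$
Then I would use (A4) to strip the scalars: since $\la,1-\la>0$, we have $\nu_{\la(x-y_1),e}(\la r)=\nu_{x-y_1,e}(\la r/\la)=\nu_{x-y_1,e}(r)$, and likewise for the second term. Substituting back gives $\nu_{x-z,e}(r)\ge\min\lbr\nu_{x-y_1,e}(r),\nu_{x-y_2,e}(r)\rbr\ge\alpha$, which is precisely the required membership.

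There is essentially no hard obstacle here; the argument is a direct computation and the only point needing care is the bookkeeping of the scalars. The single choice that makes everything collapse cleanly is splitting the threshold $r$ in the same proportion $\la:(1-\la)$ as the convex combination, because that is exactly what allows (A4) to cancel the factors $\la$ and $1-\la$ against the arguments $\la r$ and $(1-\la)r$ and return the value of $\nu$ at $r$ itself. A less symmetric split of $r$ would leave mismatched arguments and break the estimate, so pointing out this matched splitting is the crux worth emphasizing.
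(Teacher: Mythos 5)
Your proof is correct and is exactly the standard argument for this result (which the paper imports from \cite{HR} without reproducing the proof): decompose $x-\la y_1-(1-\la)y_2$ as $\la(x-y_1)+(1-\la)(x-y_2)$, split $r=\la r+(1-\la)r$, apply (A5), and cancel the scalars via (A4). The bookkeeping is right and the emphasis on the matched proportional splitting of $r$ is precisely the point that makes the computation close.
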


\begin{theorem}\cite{HR} The closure of a closed convex set in a Menger's 2-PN space is convex
\end{theorem}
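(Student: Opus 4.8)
The plan is to prove the substantive statement behind the theorem, namely that the closure of any convex set $C$ in a Menger's 2-PN space $(X,\nu)$ is convex; when $C$ is already (sequentially) closed one has $\overline{C}=C$ and the assertion follows at once. So I fix $x,y\in\overline{C}$ and $\lambda\in(0,1)$, set $w:=\lambda x+(1-\lambda)y$, and aim to exhibit $w$ as a limit of points of $C$, which by the definition of the closure will give $w\in\overline{C}$. By that same definition there are sequences $(x_n),(y_n)\subset C$ with $x_n\to x$ and $y_n\to y$, and by convexity of $C$ the points $w_n:=\lambda x_n+(1-\lambda)y_n$ all lie in $C$.

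The key step is to show $w_n\to w$ in the sense of Menger 2-PN convergence. Writing $w_n-w=\lambda(x_n-x)+(1-\lambda)(y_n-y)$, I would fix $z\in X$ and $\tau>0$, split $\tau=s+t$ with $s,t>0$, and apply axiom (A5) to the sum and then the scaling axiom (A4) to each scalar multiple, obtaining
\[
\nu_{w_n-w,z}(\tau)\ \geq\ \min\Big\{\nu_{x_n-x,z}\big(\tfrac{s}{\lambda}\big),\ \nu_{y_n-y,z}\big(\tfrac{t}{1-\lambda}\big)\Big\}.
\]
Since $s/\lambda>0$ and $t/(1-\lambda)>0$ are fixed positive numbers, the convergences $x_n\to x$ and $y_n\to y$ force each argument of the minimum to tend to $1$ as $n\to\infty$; hence the right-hand side tends to $1$, and since $\nu_{w_n-w,z}(\tau)\leq 1$ always, we conclude $\nu_{w_n-w,z}(\tau)\to 1$. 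As $z$ and $\tau$ were arbitrary, $w_n\to w$.

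Finally, since $w_n\in C$ for every $n$ and $w_n\to w$, the definition of the closure yields $w\in\overline{C}$; as $x,y\in\overline{C}$ and $\lambda\in(0,1)$ were arbitrary, $\overline{C}$ is convex. The only genuinely non-routine point is the displayed inequality: one must combine (A5), which distributes the 2-P norm of a sum over the two summands at the cost of splitting the threshold $\tau=s+t$, with (A4), which absorbs the scalars $\lambda$ and $1-\lambda$ by rescaling the thresholds to $s/\lambda$ and $t/(1-\lambda)$. The freedom to choose the split $s+t=\tau$ is exactly what keeps both rescaled thresholds strictly positive, so that both terms of the minimum converge to $1$; this is the step I expect to require the most care.
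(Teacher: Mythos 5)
Your argument is correct and complete: the decomposition $w_n-w=\lambda(x_n-x)+(1-\lambda)(y_n-y)$, the split $\tau=s+t$ fed into (A5), and the rescaling of thresholds via (A4) (legitimate since $\lambda,1-\lambda\neq 0$) together give exactly the inequality you display, and the paper's definitions of convergence and of $\overline{E}$ then yield $w\in\overline{C}$. Note that the paper itself states this theorem without proof, citing \cite{HR}, so there is no in-paper argument to compare against; your observation that the literal statement (``closure of a \emph{closed} convex set'') is vacuous and that the substantive claim is convexity of the closure of an arbitrary convex set is the right reading, and your proof of that claim is sound.
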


\begin{definition}\cite{HR} Let $ E $ be a  subset of a  Menger's 2-PN space $(X,\nu)$ then an element $ x \in E $ is called a interior point of $ E $ if there are $ r > 0,\,  e\in X $ such that $ B_{e,\alpha}\left[ x,r\right] \subseteq E $.

The set of all interior points of $ E $ is denoted by $ int(E) $.
\end{definition}

\begin{definition}\cite{HR} A subset $ E $ of a Menger's 2-PN space($(X,\nu)$ is said to be open if $ E=int(E) $.
\end{definition}

For any two points x, y in the real vector space $ X $ denote,

 \begin{center}
$ (x,y) = \lbrace  \lambda x+ (1-\lambda)y ; \lambda \in (0,1)\rbrace $
\end{center}

\begin{theorem}\cite{HR} Let $ E $ be a convex subset of a  Menger's 2-PN space $(X,\nu)$. Let $ a \in E $ and x is an interior point of $ E $ then every point in $ (a,x) = \lbrace  \lambda a+ (1-\lambda)x ; \lambda \in (0,1)\rbrace $ is an interior point of $ E $.
\end{theorem}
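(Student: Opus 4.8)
The plan is to mimic the classical normed-space argument that the segment joining a point of a convex set to an interior point consists entirely of interior points, replacing the usual norm estimates by the scaling axiom (A4) together with the definition of the locally ball. Write $\mu := 1-\lambda$, so that $\lambda,\mu \in (0,1)$ and the target point is $z := \lambda a + (1-\lambda)x = \lambda a + \mu x$. Since $x \in \mathrm{int}(E)$, there exist $e \in X$, $r>0$ and $\alpha \in (0,1)$ with $B_{e,\alpha}[x,r] \subseteq E$. I would then claim that the single locally ball $B_{e,\alpha}[z,\mu r]$ is already contained in $E$, which immediately exhibits $z$ as an interior point with witnessing data $(e,\mu r,\alpha)$.

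To prove the claim, fix any $y \in B_{e,\alpha}[z,\mu r]$, that is $\nu_{z-y,e}(\mu r)\geq \alpha$, and introduce the auxiliary point $x' := x + \frac{1}{\mu}(y-z)$. A short computation gives the two identities $\lambda a + (1-\lambda)x' = y$ and $x - x' = \frac{1}{\mu}(z-y)$; the first expresses $y$ as a convex combination of $a$ and $x'$, while the second is what lets me transfer the estimate on $y$ into an estimate on $x'$. Applying axiom (A4) with the positive scalar $1/\mu$ to the second identity yields
\[
\nu_{x-x',e}(r) = \nu_{\frac{1}{\mu}(z-y),e}(r) = \nu_{z-y,e}(\mu r) \geq \alpha,
\]
so that $x' \in B_{e,\alpha}[x,r] \subseteq E$.

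Finally I would invoke convexity of $E$: since $a \in E$, $x' \in E$ and $\lambda \in (0,1)$, the point $y = \lambda a + (1-\lambda)x'$ lies in $E$. As $y$ was an arbitrary element of $B_{e,\alpha}[z,\mu r]$, this shows $B_{e,\alpha}[z,\mu r]\subseteq E$, and hence every point of $(a,x)$ is an interior point of $E$.

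I expect the only genuine obstacle to be bookkeeping rather than anything conceptual. The crux is guessing the correct auxiliary point $x' = x + \frac{1}{\mu}(y-z)$ and tracking how the radius rescales under (A4), so that the ball around $z$ must be taken of radius $\mu r$ rather than $r$. One should also verify that the same threshold $\alpha$ and the same direction vector $e$ may be reused for $z$, since this is exactly what keeps the argument inside the single-ball formulation of the interior-point definition.
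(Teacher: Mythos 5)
This statement is quoted in the paper from reference \cite{HR} and no proof of it appears in the present text, so there is nothing to compare your argument against; judged on its own, your proof is correct. The identities $\lambda a+(1-\lambda)x'=y$ and $x-x'=\frac{1}{\mu}(z-y)$ check out, axiom (A4) with the scalar $1/\mu$ does give $\nu_{x-x',e}(r)=\nu_{z-y,e}(\mu r)\geq\alpha$, and convexity then places $y$ in $E$, so $B_{e,\alpha}[z,\mu r]\subseteq E$ as claimed. This is exactly the classical translation--dilation argument for segments ending at an interior point, correctly adapted to the locally ball $B_{e,\alpha}[\cdot,\cdot]$; the only reading you had to supply is that the parameter $\alpha$ in the paper's definition of interior point is existentially quantified along with $e$ and $r$, which is the natural interpretation and is what makes reusing the same $(e,\alpha)$ for $z$ legitimate.
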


\begin{corollary}\cite{HR} Let $ E $ be a convex subset of a  Menger's 2-PN space $(X,\nu)$.Let x be an interior point of $ E $ and $ y \in \overline{E} $ then  $(x,y)\subseteq int(E) $.
\end{corollary}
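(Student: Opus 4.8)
The plan is to adapt the classical convexity lemma that the segment joining an interior point to a point of the closure lies, except possibly for the closure endpoint, in the interior. The case $y\in E$ is already the content of the preceding theorem, so the only genuinely new feature is that $y$ need only belong to $\overline{E}$; I would handle this by approximating $y$ by a genuine point of $E$ and letting the locally ball around the interior point $x$ absorb the approximation error by convexity.

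First I would fix $z=\lambda x+(1-\lambda)y$ with $\lambda\in(0,1)$ and aim to produce $\rho>0$, $e\in X$ and $\alpha\in(0,1)$ with $B_{e,\alpha}[z,\rho]\subseteq E$, which by definition makes $z$ an interior point. Since $x\in int(E)$, fix a locally ball $B_{e,\alpha}[x,r]\subseteq E$. Since $y\in\overline{E}$, pick a sequence $(y_n)\subseteq E$ with $y_n\to y$, and retain a single term $y'=y_n$ for $n$ large, so that $\nu_{y-y',e}(s)$ is as close to $1$ as desired for a prescribed $s>0$; this is exactly the convergence in the 2-PN sense from the definition of $\overline{E}$.

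The heart of the argument is the \emph{absorption} step. For an arbitrary $w\in B_{e,\alpha}[z,\rho]$ I would solve $w=\lambda u+(1-\lambda)y'$ for $u$, so that $w$ is exhibited as a convex combination of $u$ and the point $y'\in E$; then $w\in E$ follows from convexity of $E$ as soon as $u\in B_{e,\alpha}[x,r]\subseteq E$. Writing $u-x=\tfrac{1}{\lambda}\big((w-z)+(1-\lambda)(y-y')\big)$ and estimating $\nu_{u-x,e}(r)$ from below, the scaling axiom (A4) converts the factors $1/\lambda$ and $(1-\lambda)$ into rescalings of the distribution argument, while the triangle axiom (A5) splits $r$ as a sum and bounds $\nu_{u-x,e}(r)$ by the $\min$ of a term controlled by $\nu_{w-z,e}(\rho)\ge\alpha$ and a term controlled by $\nu_{y-y',e}(\cdot)$. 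Choosing first $\rho$ small and then $y'$ close enough to $y$ forces both terms above $\alpha$, giving $u\in B_{e,\alpha}[x,r]$ and hence $w\in E$; as $w$ was arbitrary this yields $B_{e,\alpha}[z,\rho]\subseteq E$, i.e. $z\in int(E)$. Since $z$ was an arbitrary point of $(x,y)$, we conclude $(x,y)\subseteq int(E)$.

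The main obstacle I anticipate is precisely this probabilistic absorption: axiom (A5) only provides a Menger ($\min$-type) triangle inequality in which the tolerance $r$ must be split as $s+t$ before the two contributions can be separated, and axiom (A4) rescales the arguments by the convex weights. Consequently the quantifier order matters: one must fix the split of $r$ and the ball radius $\rho$ first, and only then choose $y'$ (i.e. the index $n$) fine enough that $\nu_{y-y',e}$ clears the threshold on its rescaled argument. Keeping the constants in this order, rather than choosing them simultaneously, is the delicate point; the remaining verifications are routine applications of (A4), (A5) and the convexity of $E$, and in the case $y\in E$ the argument collapses to the preceding theorem.
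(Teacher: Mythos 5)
The paper itself offers no proof of this corollary: it is imported verbatim from \cite{HR} as part of the preliminaries, so there is nothing internal to compare your argument against line by line. Judged on its own terms, your absorption argument is correct and complete within the paper's axioms. The identity $u-x=\tfrac{1}{\lambda}\bigl((w-z)+(1-\lambda)(y-y')\bigr)$ is right, (A4) with the nonzero scalars $1/\lambda$ and $1-\lambda$ converts the weights into rescalings of the argument exactly as you say, (A5) with the split $\lambda r=\tfrac{\lambda r}{2}+\tfrac{\lambda r}{2}$ gives $\nu_{u-x,e}(r)\ge\min\lbrace\nu_{w-z,e}(\lambda r/2),\nu_{y-y',e}(\lambda r/(2(1-\lambda)))\rbrace$, and your quantifier discipline (fix $e$, $\alpha$, $r$, the split and $\rho=\lambda r/2$ first, then choose $y'=y_n$ from the sequence witnessing $y\in\overline{E}$) is precisely what makes the two terms clear the threshold $\alpha$; note also that $y'$ is chosen once, independently of $w$, so the containment $B_{e,\alpha}[z,\rho]\subseteq E$ is legitimate. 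The one stylistic difference worth flagging is that you reprove the segment lemma from scratch rather than reducing to the preceding theorem (the case $a\in E$): the standard shortcut is to pick $y'\in E$ close enough to $y$ that $z$ can be rewritten as a convex combination of a point of $int(E)$ and $y'$, and then quote that theorem. Your self-contained route costs a little repetition of the (A4)/(A5) bookkeeping but has the advantage of making explicit exactly where the Menger $\min$-triangle inequality and the choice of the approximating point interact, which the reduction hides inside the earlier proof.
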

%\newpage

\section{Main Results}

\subsection{Convex series in Menger's 2-PN space}
In this section we establish the
results that are the continuation of the convexity results in 2-probabilistic normed
spaces, obtained in the paper \cite{HR}.

\begin{definition} A subset $E$ of a Menger's 2-PN space $(X,\nu)$ is called semi closed if $E$ and $\overline{E}$ have the same interior.
\end{definition}
\begin{corollary} If the interior of a convex set $E$ of a Menger's 2-PN space $(X,\nu)$ is non-empty, then $E$ is semi closed.
\end{corollary}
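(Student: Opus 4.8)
The plan is to prove the two inclusions $int(E)\subseteq int(\ov{E})$ and $int(\ov{E})\subseteq int(E)$ separately; together they give $int(E)=int(\ov{E})$, which is precisely the assertion that $E$ is semi closed. The first inclusion is immediate and needs neither convexity nor the non-emptiness hypothesis: since $E\subseteq\ov{E}$, any locally ball $B_{e,\alpha}[x,r]\subseteq E$ witnessing that $x$ is interior to $E$ also satisfies $B_{e,\alpha}[x,r]\subseteq\ov{E}$, so $x\in int(\ov{E})$.

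For the reverse inclusion I would fix, using the hypothesis, a point $a\in int(E)$, and take an arbitrary $x\in int(\ov{E})$, the goal being $x\in int(E)$. If $x=a$ there is nothing to prove, so assume $x\neq a$ and push slightly past $x$ along the ray emanating from $a$: for $t>0$ set $y_t:=x+t(x-a)=(1+t)x-ta$. A direct computation shows that $x=\la\,a+(1-\la)\,y_t$ with $\la=\tfrac{t}{1+t}\in(0,1)$, so $x$ lies on the open segment $(a,y_t)$.

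The key step is to check that $y_t\in\ov{E}$ once $t$ is small enough. Since $x\in int(\ov{E})$, there are $r>0$ and $e\in X$ with $B_{e,\alpha}[x,r]\subseteq\ov{E}$, so it suffices to show $\nu_{x-y_t,e}(r)\geq\alpha$ for small $t$. Because $x-y_t=-t(x-a)$, axiom (A4) gives $\nu_{x-y_t,e}(r)=\nu_{x-a,e}(r/t)$, and as $t\to 0^+$ the argument $r/t\to+\infty$. As $\nu_{x-a,e}$ is a proper distribution function in $\mathcal{D}$, we have $\nu_{x-a,e}(r/t)\to 1$, hence $\nu_{x-y_t,e}(r)\geq\alpha$ for all sufficiently small $t$; for such $t$, $y_t\in B_{e,\alpha}[x,r]\subseteq\ov{E}$.

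Finally, fixing such a small $t$, I would apply the Corollary on interior points (interior point $a$, element $y_t\in\ov{E}$, so $(a,y_t)\subseteq int(E)$) together with the convexity of $E$ to conclude $x\in(a,y_t)\subseteq int(E)$. Since $x\in int(\ov{E})$ was arbitrary, this gives $int(\ov{E})\subseteq int(E)$ and completes the proof. The main obstacle is exactly the verification that a point just beyond $x$ stays inside the closure: this is where both the interiority of $x$ in $\ov{E}$ and the properness of the distribution functions (the limit $1$ at $+\infty$, guaranteed by $\nu$ taking values in $\mathcal{D}$) are essential, since without the limit-$1$ property one could not force $y_t$ into the locally ball $B_{e,\alpha}[x,r]$.
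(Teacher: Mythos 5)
Your proof is correct and follows essentially the same route as the paper's: both establish $int(\ov{E})\subseteq int(E)$ by pushing an interior point of $\ov{E}$ slightly past itself along the ray from a fixed interior point of $E$ (your $y_t$ is exactly the paper's $z_\lambda$ under the reparametrization $\lambda=t/(1+t)$), showing the pushed point lies in $\ov{E}$, and then invoking the corollary that $(a,y_t)\subseteq int(E)$. The only difference is that you verify the membership $y_t\in\ov{E}$ explicitly via axiom (A4) and the properness of the distribution functions, where the paper simply appeals to $z_\lambda\to y$; this is a welcome filling-in of a detail rather than a different argument.
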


\begin{proof}

It is obvious that $int (E)\subseteq \overline{E}$. Take $x \in int(E)$. If $y \in int(\overline{E})$ then by Corollary 2.1, $(x,y)\subseteq int(E) $. Let $z_\lambda = (1-\lambda)^{-1}(y-\lambda x)$ for $0 < \lambda < 1$ then as $\lambda \rightarrow 0$ we have $z_\lambda \rightarrow y$. So, $z_\lambda \in \overline{E}$ for some $\lambda$. Therefore, $y= \lambda x + (1-\lambda)z_\lambda \in int(E) $. Hence $E$ and $\overline{E}$ has the same interior.
\end{proof}

\begin{remark} If $(X,\nu^{'})$ and $(Y,\nu^{''})$ are two Menger's 2-PN spaces then $X \times Y$ equipped with a product norm
$$\nu_{[(x,y),(z,z^{'})]}(t)= min \{ \nu^{'}_{x,z}(t), \nu^{''}_{y,z^{'}}(t)\}$$
 where $[(x,y),(z,z^{'})] \in (X \times Y) \times (X \times Y)$ and $t>0$. If $(x_n)\rightarrow x$ and $(y_n)\rightarrow y$ then $(x_n, y_n )\rightarrow (x,y)$. For, $(x_n)\rightarrow x$ and $(y_n)\rightarrow y$ implies $\displaystyle\lim_{n\rightarrow\infty} \nu^{'} _{x_n -x, z} (t) = 1$ and $\displaystyle\lim_{n\rightarrow\infty} \nu^{''}_{y_n -y, z^{'}} (t) = 1$ for all $z,z^{'} \in X$.

 So, $\displaystyle\lim_{n\rightarrow\infty}[\nu_{(x_n -x, y_n -y),(z,z^�)}]= \displaystyle\lim_{n\rightarrow\infty} [min \{ \nu^{'}_{x,z}(t), \nu^{''}_{y,z^{'}}(t)\} ]=1$.
\end{remark}
\begin{lemma}
Let $(X, \|\cdot, \cdot\|)$ be a real 2-normed space. Define the standard 2-P norm
$$\nu_{x,y}(t)=\frac{t}{t + \|x, y\| },$$
where $x,y\in X$ and $t\geq 0$.Then $x_n\rightarrow x$ for the 2-norm if, and only if $x_n\rightarrow x$ for the standard 2-P norm.
\end{lemma}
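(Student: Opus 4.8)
The plan is to reduce both implications to an elementary fact about the single real function $g(s):=\dfrac{t}{t+s}$ on $[0,\infty)$, which for each fixed $t>0$ is continuous, strictly decreasing, and bijective onto $(0,1]$, with $g(0)=1$ and $g(s)\to 0$ as $s\to\infty$. The two convergence notions, once their definitions are unwound, differ only in whether one controls the nonnegative quantity $\|x_n-x,z\|$ directly (the 2-norm) or through its image under $g$ (the standard 2-P norm). Since $g$ loses no information, the equivalence should be essentially automatic.

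First I would fix an arbitrary $z\in X$ and $t>0$ and abbreviate $a_n:=\|x_n-x,z\|\ge 0$, so that by definition of the standard 2-P norm $\nu_{x_n-x,z}(t)=\dfrac{t}{t+a_n}=g(a_n)$. Recall that $x_n\to x$ for the 2-norm means $a_n\to 0$ for every choice of $z$, while $x_n\to x$ for the 2-P norm means $\nu_{x_n-x,z}(t)\to 1$ for every $z\in X$ and every $t>0$.

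For the forward direction I would assume $a_n\to 0$ for each $z$; then continuity of $g$ at $0$ together with $g(0)=1$ gives $\nu_{x_n-x,z}(t)=g(a_n)\to 1$, and as this holds for every $z$ and every $t>0$ it is exactly convergence in the standard 2-P norm. For the converse I would assume $\nu_{x_n-x,z}(t)\to 1$ for all $z$ and $t>0$; fixing $z$ and any one $t>0$ and solving the relation $g_n:=\dfrac{t}{t+a_n}\to 1$ explicitly for $a_n$ yields
$$a_n=t\,\frac{1-g_n}{g_n},$$
whence $a_n\to t\cdot\dfrac{0}{1}=0$. Since $z$ was arbitrary, this is convergence in the 2-norm.

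The only point requiring care—and what passes for the ``main obstacle'' in an otherwise routine argument—is the matching of quantifiers: the 2-P norm convergence is stated for all $t>0$, yet the explicit formula shows that a single fixed $t$ already pins down $a_n\to 0$, so no genuine strength is gained or lost in passing between the two descriptions. I would emphasize that this robustness is precisely a consequence of $g$ being a homeomorphism of $[0,\infty)$ onto $(0,1]$, which is why the equivalence holds with no extra hypotheses on the 2-normed space.
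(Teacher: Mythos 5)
Your argument is correct and follows essentially the same route as the paper's proof: the forward direction uses continuity of $s\mapsto t/(t+s)$ at $s=0$, and the converse inverts the relation $\nu_{x_n-x,z}(t)=t/(t+\|x_n-x,z\|)\to 1$ to conclude $\|x_n-x,z\|\to 0$. Your explicit formula $a_n=t(1-g_n)/g_n$ and the remark about quantifiers merely make precise the inversion step that the paper states more tersely.
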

\begin{proof}
Suppose $x_n\rightarrow x$ for the 2-norm $\|\cdot, \cdot\|$, $\|x_n-x,z\|\rightarrow 0$ as $n\rightarrow +\infty$ for every $z\in X $. Let $z\in X$, one has
\begin{align*}
\displaystyle\lim_{n\rightarrow \infty} \nu_{x_n-x, z}(t)&=\displaystyle\lim_{n\rightarrow \infty}\frac{t}{t+\|x_n-x,z\|}\\
&= \frac{t}{t + \displaystyle\lim_{n \rightarrow \infty}\|x_n-x,z\|}=\frac{t}{t + 0} = 1, i.e.\quad x_n\rightarrow x\quad \textrm{for the 2-P norm}.
\end{align*}
Conversely, assume that $x_n\rightarrow x$ for the 2-P norm, then

\begin{align*}
\lim_{n\rightarrow \infty} \nu_{x_n-x, z}(t) & = 1\Rightarrow \displaystyle\lim_{n\rightarrow \infty}\frac{t}{t+\|x_n-x,z\|} = 1\\
& \Rightarrow \displaystyle\lim_{n\rightarrow \infty} \|x_n-x, z\|=0\Rightarrow x_n\rightarrow x
\textrm \quad{~~ for~~ the ~~ 2-norm ~~}.
\end{align*}
\end{proof}

\subsection{Compactness and Boundedness}

\begin{definition}
A subset $E\subset X$ is said to be $compact$ if each sequence of elements of $X$ has a convergent subsequence in $ E $.
\end{definition}

\begin{definition} Let $F$ be a subset of a 2-PN space $(X,\nu)$. A convex series of elements of $F$ is a series of the form $\Sigma^{\infty}_{n=1} \lambda_{n}x_{n}$ where $x_{n}\in F$ and $\lambda_{n}\geq 0$ for each $n$ and $\Sigma^{\infty}_{n=1} \lambda_{n}=1$.

The set $F$ is said to be \textit{Convex series closed} if $F$ contains the sum of every convergent convex series of its elements. Also, $F$ is said to be \textit{Convex series compact} if every convex series of its elements is convergent to a point of $F$.
\end{definition}

\begin{lemma}
Every convex series compact set in a Menger's 2-PN space $(X,\nu)$ is Convex series closed.

\begin{proof}: Let $F$ be a convex series compact set in $(X,\nu)$ then there exists a convex series of elements of $F$, say $\sum^{\infty}_{n=1} \lambda_{n}x_{n}$ where $x_{n}\in F$ and $\lambda_{n}\geq 0$, which converges to some $x \in F$.

$\Rightarrow \displaystyle\lim_{n\rightarrow\infty}\nu_{\sum \lambda_{n}x_{n}-x ,z}(t)=1$ for all $z\in X$

$\Rightarrow \nu_{\sum \lambda_{n}x_{n}-x ,z}(t)=\varepsilon_{0}(t)$ for all $z\in X$

$\Rightarrow \sum \lambda_{n}x_{n}-x$ and $z$ are linearly dependent

$\Rightarrow \sum \lambda_{n}x_{n}-x=\lambda z$ for all $z\in X$

In particular for $\lambda z = z - x$, with $z\in F$ and $\sum \lambda_n x_n - x = (z - x) + x = z$. Hence $F$ is Convex series closed.

\end{proof}
\end{lemma}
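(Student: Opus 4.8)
The plan is to exploit the fact that the two notions are logically nested. Convex series compactness already forces \emph{every} convex series of elements of $F$ to converge to a point of $F$, whereas convex series closedness only requires that those convex series which \emph{happen} to converge have their sum inside $F$. So once the definitions are unwound the statement is almost immediate, and the only genuine point to settle is that limits in a Menger's 2-PN space are unique, so that ``the sum'' of a convergent series is a well-defined element that must coincide with the limit supplied by compactness.

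First I would take an arbitrary convergent convex series $\sum_{n=1}^{\infty}\lambda_n x_n$ of elements of $F$ (so $x_n\in F$, $\lambda_n\ge 0$, $\sum_{n}\lambda_n=1$) and denote its sum by $s$, meaning that the partial sums $s_N=\sum_{n=1}^{N}\lambda_n x_n$ satisfy $s_N\to s$. Since $F$ is convex series compact, this same series is by hypothesis convergent to some point $p\in F$. If I can show $s=p$, then $s\in F$; as the convergent convex series was arbitrary, $F$ then contains the sum of every convergent convex series of its elements, which is exactly the assertion that $F$ is convex series closed.

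The one substantive step is therefore uniqueness of limits: if $s_N\to s$ and $s_N\to p$ then $s=p$. Here I would apply axiom (A5) to the decomposition $s-p=(s-s_N)+(s_N-p)$, obtaining $\nu_{s-p,z}(u+v)\ge\min\{\nu_{s-s_N,z}(u),\nu_{s_N-p,z}(v)\}$ for all $u,v>0$ and all $z\in X$. Using (A4) with $\alpha=-1$ gives $\nu_{s-s_N,z}=\nu_{s_N-s,z}$, so both terms on the right tend to $1$ as $N\to\infty$. Hence $\nu_{s-p,z}(r)=1$ for every $r>0$ and every $z\in X$, and by (A2) the vector $s-p$ is linearly dependent with every $z\in X$. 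Since $\dim X>1$, were $s-p\ne\theta$ one could choose $z$ outside the span of $s-p$, contradicting linear dependence; therefore $s-p=\theta$, i.e. $s=p$.

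I expect this uniqueness argument to be the main (indeed essentially the only) obstacle, since the remainder is a direct reading of the two definitions. A secondary point worth making explicit is that convergence of the series is understood in the sense of its partial sums, so that the phrase ``the sum'' becomes meaningful precisely once uniqueness of limits has been established.
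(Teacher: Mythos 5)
Your proof is correct, and it is in fact tighter than the paper's own argument, although both ultimately rest on the same two ingredients: axiom (A2) (the value $1$ for all $t>0$ forces linear dependence) together with the standing hypothesis $\dim X>1$. The paper's proof opens by asserting that ``there exists a convex series of elements of $F$ which converges to some $x\in F$'' --- an existential where the definition of convex series closedness demands a universal over all convergent convex series --- and then deduces from $\nu_{\sum\lambda_n x_n-x,\,z}(t)=\varepsilon_0(t)$ that $\sum\lambda_n x_n-x=\lambda z$ ``for all $z$'', with a coefficient $\lambda$ that silently varies with $z$, before closing with an algebraically inconsistent substitution. You instead take an arbitrary convergent convex series with sum $s$, invoke compactness to produce a limit $p\in F$ of the same series, and isolate the one genuinely needed fact, uniqueness of limits, which you prove correctly via (A5) applied to $s-p=(s-s_N)+(s_N-p)$, (A4) with $\alpha=-1$, and the observation that a nonzero vector in a space of dimension greater than one cannot be linearly dependent with every $z\in X$. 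That last observation is precisely the clean way to finish the linear-dependence step that the paper fumbles. Your version buys correct quantifier handling and an explicit justification that ``the sum'' of a convergent series is well defined; nothing is lost relative to the paper's route.
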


\begin{lemma}
Let $F$ be a convex subset of a Menger's 2-PN space $(X,\nu)$ and $x_{n}\in F$ for $n \geq 1$. If $\sum \lambda_{n}=\lambda >0$ where $\lambda_{n}\geq 0$ then $\sum\lambda^{-1}\lambda_{n}x_{n}$ is a convex series of elements of $F$. So, if $\sum\lambda_{n}x_{n}\rightarrow x$ then $x=\lambda a$ where $a \in \overline{F}$.

\begin{proof}: We have $\sum\lambda^{-1}\lambda_{n}x_{n}$ is a convex series of elements of $F$ because $x_{n}\in F$ and $\lambda > 0$ with $\sum\lambda^{-1}\lambda_{n}=\lambda^{-1}\sum \lambda_{n}=\lambda^{-1}\lambda=1 $. Suppose $\sum\lambda_{n}x_{n}\rightarrow x$  then $\sum\lambda^{-1}\lambda_{n}x_{n}=\lambda^{-1}\sum \lambda_{n}x_{n}\rightarrow \lambda^{-1} x \in \overline{F}$. ie; $\lambda^{-1} x = a $ for some $a \in \overline{F}$ implies $x=\lambda a$.
\end{proof}
\end{lemma}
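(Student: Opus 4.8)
The plan is to treat the two assertions in the lemma separately: the first is a direct bookkeeping check against the definition of a convex series, while the second rests on the convexity of $F$ together with the continuity of the vector operations in the 2-P norm. For the first assertion I would set $\mu_n := \lambda^{-1}\lambda_n$ and verify the defining conditions directly. Nonnegativity of $\mu_n$ is immediate from $\lambda_n \geq 0$ and $\lambda > 0$, while $\sum_n \mu_n = \lambda^{-1}\sum_n \lambda_n = \lambda^{-1}\lambda = 1$; since each $x_n \in F$, the series $\sum_n \lambda^{-1}\lambda_n x_n$ is a convex series of elements of $F$ by definition. No use of the 2-PN structure is needed at this stage.

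For the second assertion, suppose $\sum_n \lambda_n x_n \to x$ and put $a := \lambda^{-1}x$; the goal is to exhibit a sequence in $F$ converging to $a$, so that $a \in \overline{F}$ and hence $x = \lambda a$. The natural candidates are the normalized partial sums $a_N := c_N^{-1}\sum_{n=1}^N \lambda_n x_n$, where $c_N := \sum_{n=1}^N \lambda_n$. For all $N$ large enough that $c_N > 0$, each $a_N$ is a finite convex combination of $x_1,\dots,x_N$ (the weights $\lambda_n / c_N$ are nonnegative and sum to $1$), so convexity of $F$ gives $a_N \in F$. It then remains only to show $a_N \to a$.

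The crux is this last convergence, which is a joint continuity statement: $c_N \to \lambda \neq 0$ as scalars and $u_N := \sum_{n=1}^N \lambda_n x_n \to x$ as vectors, and I must pass both simultaneously through the division. I would decompose $a_N - a = c_N^{-1}(u_N - x) + (c_N^{-1} - \lambda^{-1})x$ and estimate each summand using axiom (A4). For the first summand, $\nu_{c_N^{-1}(u_N-x),z}(t) = \nu_{u_N - x,z}(|c_N|t)$; since $c_N \to \lambda$, the argument $|c_N|t$ stays bounded below by a fixed positive number for large $N$, so monotonicity of $\nu$ together with $u_N \to x$ forces this to tend to $1$. For the second summand, $\nu_{(c_N^{-1}-\lambda^{-1})x,z}(t) = \nu_{x,z}(t / |c_N^{-1}-\lambda^{-1}|)$, and since $c_N^{-1} - \lambda^{-1} \to 0$ the argument tends to $+\infty$, whence this tends to $\nu_{x,z}(+\infty) = 1$ (and if $c_N^{-1} = \lambda^{-1}$ the summand is the null vector, contributing $1$ by (A2)). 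Combining through the triangle inequality (A5) gives $\nu_{a_N - a, z}(t) \geq \min\{\nu_{c_N^{-1}(u_N-x),z}(t/2), \nu_{(c_N^{-1}-\lambda^{-1})x,z}(t/2)\} \to 1$, so $a_N \to a$. Therefore $a \in \overline{F}$ and $x = \lambda a$.

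The step I expect to be the main obstacle is precisely this continuity-through-division argument. The difficulty is that the scalar factor $c_N^{-1}$ and the vector $u_N$ vary at the same time, so one cannot simply quote continuity of a fixed scalar multiple; the decomposition $a_N - a = c_N^{-1}(u_N - x) + (c_N^{-1}-\lambda^{-1})x$ is what separates the two effects and makes each estimable by (A4). This is exactly the point that the one-line assertion ``$\lambda^{-1}\sum_n \lambda_n x_n \to \lambda^{-1}x \in \overline{F}$'' leaves implicit, and supplying it rigorously is the substance of the proof.
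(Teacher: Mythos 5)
Your proof is correct, and it is worth recording how it differs from the paper's, because the paper disposes of the second assertion in a single line: ``$\sum\lambda^{-1}\lambda_{n}x_{n}=\lambda^{-1}\sum\lambda_{n}x_{n}\rightarrow\lambda^{-1}x\in\overline{F}$''. That line leaves two things unjustified, and you supply both. First, the paper never exhibits a sequence of points of $F$ converging to $\lambda^{-1}x$; the partial sums $\sum_{n=1}^{N}\lambda^{-1}\lambda_{n}x_{n}$ will not serve, since their weights sum to $\lambda^{-1}\sum_{n=1}^{N}\lambda_{n}<1$ and such sub-convex combinations need not lie in the convex set $F$. Your replacement of these by the renormalized partial sums $a_{N}=c_{N}^{-1}\sum_{n=1}^{N}\lambda_{n}x_{n}$, which are genuine convex combinations of $x_{1},\dots,x_{N}$ and hence lie in $F$, is exactly the standard device from the theory of convex-series-closed sets and is the real content of the lemma. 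Second, the paper silently invokes continuity of the map $y\mapsto\lambda^{-1}y$ together with the convergence $c_{N}\to\lambda$; your decomposition $a_{N}-\lambda^{-1}x=c_{N}^{-1}(u_{N}-x)+(c_{N}^{-1}-\lambda^{-1})x$, estimated via (A4), the monotonicity of distribution functions, the fact that $\nu_{x,z}\in\mathcal{D}$ forces $\nu_{x,z}(s)\to1$ as $s\to\infty$, and finally (A5) with $t/2+t/2$, makes this precise (and you correctly handle the degenerate case $c_{N}=\lambda$ via (A2)). In short, both arguments follow the same strategy of normalizing and appealing to convexity plus closure, but yours identifies the correct witnessing sequence in $F$ and proves the convergence that the paper only asserts.
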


\begin{theorem}Let $(X,\nu)$ be a Menger's 2-PN space then every closed convex subset of $ X $ is convex series closed.
\end{theorem}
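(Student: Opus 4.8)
The plan is to realize the limit of any convergent convex series as a limit of genuine finite convex combinations that already lie in $F$, and then invoke closedness; this is exactly the preceding scaling Lemma specialized to total mass $1$. So let $F$ be closed and convex, and let $\sum_{n=1}^{\infty}\lambda_n x_n$ be a convergent convex series of elements of $F$ with limit $x$, where $x_n\in F$, $\lambda_n\geq 0$ and $\sum_{n=1}^{\infty}\lambda_n=1$. The goal is to prove $x\in F$.

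First I would note that a convex set contains all finite convex combinations of its elements: iterating the two-point relation $\lambda y+(1-\lambda)y'\in F$ shows $\sum_{i=1}^{N}\alpha_i y_i\in F$ whenever $y_i\in F$, $\alpha_i\geq 0$ and $\sum_i\alpha_i=1$. Put $s_N=\sum_{n=1}^{N}\lambda_n x_n$ and $\mu_N=\sum_{n=1}^{N}\lambda_n$. The $\mu_N$ increase to $1$, so $\mu_N>0$ for all large $N$, and for such $N$ the normalized partial sum $t_N:=\mu_N^{-1}s_N=\sum_{n=1}^{N}(\lambda_n/\mu_N)x_n$ is a finite convex combination of elements of $F$; hence $t_N\in F$.

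The crux is to verify $t_N\to x$. I would decompose $t_N-x=\mu_N^{-1}(s_N-x)+(\mu_N^{-1}-1)x$ and apply (A5) to bound $\nu_{t_N-x,z}(t)$ from below by $\min\{\nu_{\mu_N^{-1}(s_N-x),z}(t/2),\,\nu_{(\mu_N^{-1}-1)x,z}(t/2)\}$. By (A4) the first term equals $\nu_{s_N-x,z}(\mu_N t/2)$, which for large $N$ (where $\mu_N\geq 1/2$) is at least $\nu_{s_N-x,z}(t/4)$ by monotonicity of the distribution function and tends to $1$ since $s_N\to x$. By (A4) the second term equals $\nu_{x,z}\!\left(t/(2|\mu_N^{-1}-1|)\right)$ whenever $\mu_N\neq 1$; as $\mu_N^{-1}-1\to 0$ its argument tends to $+\infty$, so the value tends to $\nu_{x,z}(+\infty)=1$ (and in the degenerate case $\mu_N=1$ the correction term is $\theta$, for which $\nu_{\theta,z}(t/2)=1$ by (A2)). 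Thus $\nu_{t_N-x,z}(t)\to 1$ for every $z\in X$ and $t>0$, i.e. $t_N\to x$.

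Finally, since $t_N\in F$ and $t_N\to x$ while $F$ is closed, $x\in\overline{F}=F$; equivalently, the preceding Lemma applied with $\lambda=1$ gives $x=1\cdot a=a$ for some $a\in\overline{F}=F$. As the convergent convex series was arbitrary, $F$ is convex series closed. I expect the only real obstacle to be the convergence step $t_N\to x$: one must push the scalars $\mu_N^{-1}$ and $\mu_N^{-1}-1$ through (A4) and control the shifted arguments using $\mu_N\to 1$ together with $\nu_{x,z}(+\infty)=1$, while the remaining steps are routine bookkeeping.
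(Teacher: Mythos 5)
Your proof is correct, but it takes a genuinely different route from the paper's. The paper peels off the first term, writes $x=\lambda_1 x_1+\sum_{n\ge 2}\lambda_n x_n$, applies the preceding scaling lemma to the tail (whose coefficients sum to $1-\lambda_1>0$) to get $x=\lambda_1 x_1+(1-\lambda_1)a$ with $a\in\overline F=F$, and then concludes by two-point convexity; it thus leans entirely on that lemma, whose own proof simply asserts that the limit of a convex series of elements of $F$ lies in $\overline F$ without justification (and it also tacitly assumes $\lambda_1<1$). You instead work directly with the normalized partial sums $t_N=\mu_N^{-1}s_N$, observe they are finite convex combinations and hence lie in $F$, and then actually prove $t_N\to x$ via the decomposition $t_N-x=\mu_N^{-1}(s_N-x)+(\mu_N^{-1}-1)x$ together with (A4), (A5), monotonicity, and $\nu_{x,z}(+\infty)=1$ (which holds because $\nu$ maps into the proper d.f.'s $\mathcal D$). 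This is precisely the step the paper's lemma glosses over, so your argument is more self-contained and in fact fills a gap in the chain of reasoning the paper relies on; your closing remark that the lemma with $\lambda=1$ gives the result immediately is also a shorter path than the paper's tail-splitting. The only caveats are cosmetic: you should make sure the degenerate cases (some $\mu_N=1$ before the series terminates, or $\mu_N=0$ for small $N$) are handled as you indicate, which you do.
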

\begin{proof}
Let $F$ be a closed convex subset of $X$ and $\Sigma \lambda_{n}x_{n}$ be a convergent convex series of elements of $F$ with sum $x$. We have $\sum \lambda_{n}x_{n}=x\Rightarrow x= \lambda_{1}x_{1}+\displaystyle\sum^{\infty}_{n=2} \lambda_{n}x_{n}$. Since $\displaystyle\sum^{\infty}_{n=1} \lambda_{n}=1\Rightarrow \displaystyle\sum^{\infty}_{n=2} \lambda_{n}=1-\lambda_{1}>0$. By Remark(2.4), $x=\lambda_{1}x_{1}+(1-\lambda_{1})a$ where $a\in \overline{F}$ then $x\in F$. Hence $F$ is convex series compact.
\end{proof}
\begin{definition}A subset $F$ of a Menger's 2-PN space $(X,\nu)$ is said to be \textit{bounded} if for every $r \in (0,1)$ there exists $t_{0}>0$ such that $\nu_{x,y}(t_{0})>1-r$ for every $x \in F$ and $y\in X$.
\end{definition}

\begin{theorem} A subset $F$ of a 2-normed space $(X,\|.,.\|)$  is bounded if and only if $F$ is bounded in the Menger's 2-PN space $(X,\frac{t}{t + \|.,.\| })$.
\end{theorem}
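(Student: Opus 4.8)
The plan is to reduce the statement to a single algebraic equivalence relating the value of the standard 2-P norm to the underlying 2-norm, and then to match the quantifiers appearing in the two notions of boundedness. The governing identity is that, for $t_{0}>0$,
$$\nu_{x,y}(t_{0})=\frac{t_{0}}{t_{0}+\|x,y\|}>1-r \quad\Longleftrightarrow\quad \|x,y\|<\frac{r\,t_{0}}{1-r},$$
which follows by clearing denominators (legitimate since $t_{0}+\|x,y\|>0$) and solving for $\|x,y\|$. Two monotonicity facts will be used repeatedly: for fixed $t_{0}$ the quantity $t_{0}/(t_{0}+s)$ is decreasing in $s\geq 0$, and for fixed $s$ it is increasing in $t_{0}>0$ with supremum $1$ as $t_{0}\to\infty$.

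First I would prove the forward implication. Assuming $F$ is bounded in the 2-normed space, there is a constant $M>0$ with $\|x,y\|\leq M$ for all $x\in F$ and $y\in X$. Fix $r\in(0,1)$. By monotonicity in the second variable, $\nu_{x,y}(t_{0})\geq t_{0}/(t_{0}+M)$ for every such pair $x,y$, so it suffices to choose $t_{0}$ with $t_{0}/(t_{0}+M)>1-r$; solving gives any $t_{0}>(1-r)M/r$. With this single $t_{0}$, valid simultaneously for all $x\in F$ and $y\in X$, the boundedness condition of the Menger's 2-PN space is satisfied.

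For the converse I would specialize the 2-PN boundedness hypothesis to one value of $r$, say $r=\tfrac{1}{2}$, obtaining a threshold $t_{0}>0$ with $\nu_{x,y}(t_{0})>\tfrac{1}{2}$ for all $x\in F$ and $y\in X$. The displayed equivalence then yields $\|x,y\|<t_{0}$ uniformly over $F\times X$, so $F$ is bounded in the 2-normed space with bound $M=t_{0}$.

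The main point to get right is not any delicate estimate but the correct handling of the quantifier order: the definition of boundedness in the 2-PN space demands, for each $r$, a single threshold $t_{0}$ that works simultaneously for all $x\in F$ and all $y\in X$, and one must check that a uniform 2-norm bound $M$ indeed produces such a uniform $t_{0}$ (and conversely). The freedom to send $t_{0}\to\infty$, together with $t_{0}/(t_{0}+M)\to 1$, is precisely what allows one finite bound $M$ to meet the condition for every $r\in(0,1)$ at once.
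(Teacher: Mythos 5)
Your proposal is correct and follows essentially the same route as the paper: the same algebraic equivalence $\frac{t_0}{t_0+\|x,y\|}>1-r \Leftrightarrow \|x,y\|<\frac{rt_0}{1-r}$ drives both directions, with the paper choosing $t_0=\frac{M(1-r)}{r}$ and $M=\frac{t_0 r}{1-r}$ where you choose $t_0>\frac{(1-r)M}{r}$ and specialize to $r=\frac12$. Your strict choice of $t_0$ is in fact a slight improvement, since the paper's $t_0=\frac{M(1-r)}{r}$ only yields $\nu_{x,y}(t_0)\geq 1-r$ when $\|x,y\|=M$ is attained.
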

\begin{proof}
Suppose that $F$ is a bounded subset of $(X,\|.,.\|)$ then for every $x\in F$ there exists $M>0$ such that $\|x,y\|\leq M$ for every $y\in X$. We have $\nu_{x,y}(t)=\frac{t}{t + \|x,y\| }$ for $x,y\in X$. Let $r\in (0,1)$, choose $t_{0}=\frac{M(1-r)}{r }$ then $t_{0}>0$ and $\nu_{x,y}(t_{0})=\frac{t_{0}}{t_{0} + \|x,y\| }>\frac{t_{0}}{t_{0} + M}=1-r$. So, $F$ is bounded in $(X,\frac{t}{t + \|.,.\| })$. Conversely, $F$ is bounded in $(X,\frac{t}{t + \|.,.\| })$ then for every $r \in (0,1)$ there exists $t_{0}>0$ such that $\nu_{x,y}(t_{0})>1-r$ for every $x \in F$ and $y\in X$ implies $ \frac{t_{0}}{t_{0} + \|x,y\| }> 1-r$. Choose $M=\frac{t_{0}r}{1-r}$ then $M>0$ with $\|x,y\|< M$ for every $y\in X$.
\end{proof}

\begin{theorem} Let $(X,\nu)$ be a Menger's 2-PN space and $F$ be a convex series compact subset of $X$ then
\begin{enumerate}
\item $F$ is convex series closed.
\item  $F$ is bounded.
\end{enumerate}

The converse is true if $X$ is complete.
\end{theorem}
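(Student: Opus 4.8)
The plan is to prove the two forward implications separately and then establish the converse under completeness. Item (1) requires nothing new: it is exactly the earlier Lemma asserting that every convex series compact set in a Menger's 2-PN space is convex series closed, so I would simply invoke that result.

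For item (2), I would argue by contradiction, following the classical template that a convex-series-compact set cannot contain a sequence whose ``size'' grows geometrically. Suppose $F$ is convex series compact but not bounded. Unwinding the definition of boundedness, the failure of boundedness should supply a fixed test vector $y_0 \in X$ and a level $r_0 \in (0,1)$ such that for every $t_0 > 0$ one can choose $x \in F$ with $\nu_{x,y_0}(t_0) \leq 1 - r_0$. I would then select, for each $n$, an element $x_n \in F$ with $\nu_{x_n,y_0}(2^n) \leq 1 - r_0$, and form the convex series $\sum_{n \geq 1} 2^{-n} x_n$ (here $\sum 2^{-n} = 1$ and each $x_n \in F$, so this is a genuine convex series of elements of $F$). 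By (A4) its $n$-th term satisfies $\nu_{2^{-n} x_n,\, y_0}(1) = \nu_{x_n,y_0}(2^n) \leq 1 - r_0$, so the terms stay bounded away from $\varepsilon_0$ at the pair $(y_0,1)$. Since the partial sums $s_n = \sum_{k \leq n} 2^{-k} x_k$ obey $s_n - s_{n-1} = 2^{-n} x_n$, the sequence $(s_n)$ fails to be Cauchy and hence cannot converge, contradicting convex series compactness; thus $F$ must be bounded. I expect the main obstacle to be precisely the first step of this argument, namely isolating a single fixed direction $y_0$ witnessing unboundedness, because the definition of boundedness quantifies simultaneously over all $y \in X$; this is the point that must be spelled out most carefully (e.g. by reading boundedness directionwise, or by an extra selection argument).

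For the converse, assume $X$ is complete and $F$ is both convex series closed and bounded, and let $\sum \lambda_n x_n$ be any convex series of elements of $F$; I must show it converges to a point of $F$. Since $X$ is complete it suffices to show the partial sums $s_n = \sum_{k=1}^{n} \lambda_k x_k$ are Cauchy. Fix $z \in X$, $t > 0$ and $r \in (0,1)$, and use boundedness to pick $t_0 > 0$ with $\nu_{x,z}(t_0) > 1 - r$ for every $x \in F$. For $m > n$ write $s_m - s_n = \sum_{k=n+1}^{m} \lambda_k x_k$ and apply the iterated form of (A5) together with (A4):
\[
\nu_{s_m - s_n,\, z}\!\left(t_0 \sum_{k=n+1}^{m} \lambda_k\right)
\;\geq\; \min_{n < k \leq m} \nu_{\lambda_k x_k,\, z}(\lambda_k t_0)
\;=\; \min_{n < k \leq m} \nu_{x_k,\, z}(t_0)
\;>\; 1 - r .
\]
Because $\sum_k \lambda_k = 1$ converges, its tails tend to $0$, so for all sufficiently large $n$ (uniformly in $m > n$) we have $t_0 \sum_{k=n+1}^{m} \lambda_k \leq t$, and monotonicity of $\nu_{s_m - s_n,\, z}$ then yields $\nu_{s_m - s_n,\, z}(t) > 1 - r$. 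Hence $(s_n)$ is Cauchy, so completeness gives $s_n \to x$ for some $x \in X$, i.e. $\sum \lambda_n x_n = x$; since $F$ is convex series closed it contains the sum of every convergent convex series of its elements, so $x \in F$, and $F$ is convex series compact. The only auxiliary facts used are that a convergent sequence in a Menger 2-PN space is Cauchy and that the consecutive differences of a Cauchy sequence tend to $\theta$, both of which follow directly from (A4) and (A5).
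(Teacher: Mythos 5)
Your proposal follows essentially the same route as the paper's proof: part (1) by citing the earlier lemma, part (2) by contradiction via the convex series $\sum 2^{-n}x_n$ and axiom (A4) applied at $t=1$, and the converse by showing the partial sums are Cauchy using boundedness together with iterated (A5) and (A4), then invoking completeness and convex series closedness. The one place you diverge is in the negation of boundedness: the paper simply posits a sequence $(a_n)\subset F$ with $\nu_{a_n,z}(2^n)<1-r$ for \emph{all} $z\in X$, silently assuming the unboundedness witness is uniform in the second argument, whereas you correctly flag that extracting a single fixed direction $y_0$ is the delicate step --- a quantifier gap that the paper leaves unaddressed just as your sketch does.
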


\begin{proof}
(1) By Remark(2.3) it is clear.

(2) We prove this result by contradiction method.

Let $r\in (0,1)$ and $(a_{n}) \subset F$ such that $\nu_{a_{n},z}(2^{n})< 1-r)$ for all $n$ and $z\in X$. We have $\displaystyle\sum^{\infty}_{n=1} 2^{-n}=1$ then $\displaystyle\sum^{\infty}_{n=1} 2^{-n}a_{n}$ is a convex series of elements of $F$. Since $F$ is convex series compact, $\displaystyle\sum^{\infty}_{n=1} 2^{-n}a_{n}$ is convergent to some point in $F$. Hence $2^{-n}a_{n}$ converges to $0$ as $ n\rightarrow +\infty $ implies that for every $\epsilon >0$ and $r\in (0,1)$ there exists $k\in \N$ such that $\nu_{2^{-n}a_{n},z}(t) >1-r$ for every $n\geq k$ and $t>0$. In particular, $\nu_{2^{-n}a_{n},z}(1) >1-r$ for every $n\geq k \Rightarrow \nu_{a_{n},z}(2^{n})>1-r$, a contradiction to our assumption. So, $F$ is bounded.

Conversely, Suppose that $X$ is complete. Assume that (1) and (2) holds. One has to prove that $F$ is convex series compact. Choose $r\in (0,1)$. Since $F$ is bounded there exists $t_{0}>0$ such that $\nu_{x,y}(t_{0})>1-r$ for every $x \in F$ and $y\in X$. Let $\displaystyle\sum^{\infty}_{n=1}\lambda_{n}x_{n}$ be a convergent convex series of elements of $F$. If $\gamma_{n,m}=\displaystyle\sum^{m}_{i=n}\lambda_{i}$ then $\gamma_{n,m}\rightarrow 0$ as $n,m\rightarrow \infty$. Choose $t\in \R$ then there is $k\in \N$ such that $t\gamma^{-1}_{n,m}>0$ for every $m,n \geq k$. Since $ F $ is bounded, $ \nu_{x_{n},z}(t\gamma^{-1}_{n,m})>1-r $ implies
\begin{align*}
\nu_{\displaystyle\sum^{m}_{i=n}\lambda_{i}x_{i},z}(t)&= \nu_{\displaystyle\sum^{m}_{i=n}\lambda_{i}x_{i},z}(t\gamma^{-1}_{n,m}(\lambda_{n}+\lambda_{n+1}+...+\lambda_{m}))\\
&= \nu_{\displaystyle\sum^{m}_{i=n}\lambda_{i}x_{i},z}(t\gamma^{-1}_{n,m}\lambda_{n}+t\gamma^{-1}_{n,m}\lambda_{n+1}+...+t\gamma^{-1}_{n,m}\lambda_{m})\\
&\geq min\lbrace \nu_{\lambda_{n}x_{n},z}(t\gamma^{-1}_{n,m}\lambda_{n}),\nu_{\lambda_{n+1}x_{n+1},z}(t\gamma^{-1}_{n,m}\lambda_{n+1}),...,\nu_{\lambda_{m}x_{m},z}(t\gamma^{-1}_{n,m}\lambda_{m})\rbrace\\
&= min\lbrace \nu_{x_{n},z}(t\gamma^{-1}_{n,m}), \nu_{x_{n+1},z}(t\gamma^{-1}_{n,m}),...,\nu_{x_{m},z}(t\gamma^{-1}_{n,m}) \rbrace\\
& > min\lbrace 1-r, 1-r,..., 1-r\rbrace \\
&= 1-r
\end{align*}
ie; $\displaystyle\sum^{\infty}_{n=1}\lambda_{n}x_{n}$ is a Cauchy sequence in $ X $. So, $\displaystyle\sum^{\infty}_{n=1}\lambda_{n}x_{n}$ converges. Since $ F $ is convex series closed, the sum of $\displaystyle\sum^{\infty}_{n=1}\lambda_{n}x_{n}$ is in $ F $. Hence $ F $ is convex series compact.
\end{proof}

\begin{theorem} Let $(X,\nu)$ be a Menger's 2-PN space and $F$ be a complete, convex and  bounded subset of $ X $ then $ F $ is convex series compact.
\end{theorem}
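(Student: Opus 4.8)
The plan is to adapt the converse half of the preceding theorem, whose Cauchy estimate used only the boundedness of $F$ (and not its being convex series closed), and to supply the missing ingredient---convergence of the sum \emph{inside} $F$---from the hypotheses that $F$ is convex and complete, rather than from completeness of the ambient space $X$. Recall that $F$ is convex series compact precisely when every convex series of its elements converges to a point of $F$, so I would fix an arbitrary convex series $\sum_{n=1}^{\infty}\lambda_n x_n$ with $x_n\in F$, $\lambda_n\ge 0$, $\sum\lambda_n=1$, and set $s_m=\sum_{i=1}^{m}\lambda_i x_i$ and $\Lambda_m=\sum_{i=1}^{m}\lambda_i$.

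First I would reuse verbatim the computation of the preceding theorem: expanding $t=t\gamma_{n,m}^{-1}\sum_{i=n+1}^{m}\lambda_i$, applying (A5) termwise, rescaling by (A4), and invoking the boundedness of $F$ together with $\gamma_{n,m}=\sum_{i=n+1}^{m}\lambda_i\to 0$, one gets that $(s_m)$ is Cauchy in the sense that $\nu_{s_m-s_n,z}(t)\to 1$ for every $z\in X$ and $t>0$. The obstruction is that $s_m\notin F$ in general, since $\Lambda_m<1$, so completeness of $F$ cannot yet be applied. This is exactly where convexity must enter: the normalized partial sum $\tilde s_m=\Lambda_m^{-1}s_m=\sum_{i=1}^{m}(\lambda_i/\Lambda_m)x_i$ is a genuine finite convex combination of $x_1,\dots,x_m$, hence $\tilde s_m\in F$ for all $m$ large enough that $\Lambda_m>0$ (indeed eventually $\Lambda_m\ge 1/2$).

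Next I would show $(\tilde s_m)$ is Cauchy in $F$ by writing $\tilde s_m-\tilde s_n=\Lambda_m^{-1}(s_m-s_n)+(\Lambda_m^{-1}-\Lambda_n^{-1})s_n$, splitting $t=t/2+t/2$ in (A5), and estimating the two pieces separately. The first is $\nu_{\Lambda_m^{-1}(s_m-s_n),z}(t/2)=\nu_{s_m-s_n,z}(t\Lambda_m/2)$ by (A4), which exceeds $1-r$ for large $m,n$ by the Cauchy property of $(s_m)$ and $\Lambda_m\to 1$; the second is $\nu_{(\Lambda_m^{-1}-\Lambda_n^{-1})s_n,z}(t/2)=\nu_{\tilde s_n,z}\big(\tfrac{t}{2}\Lambda_n^{-1}|\Lambda_m^{-1}-\Lambda_n^{-1}|^{-1}\big)$ by two applications of (A4), whose argument tends to $+\infty$ because $|\Lambda_m^{-1}-\Lambda_n^{-1}|\to 0$, so that boundedness of $F$ applied to $\tilde s_n\in F$ forces it past $1-r$ as well. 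Completeness of $F$ then yields $\tilde s_m\to a$ for some $a\in F$.

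It remains to identify the limit of the series itself. Since $s_m=\Lambda_m\tilde s_m$ with $\Lambda_m\to 1$ and $\tilde s_m\to a$, I would write $s_m-a=\Lambda_m(\tilde s_m-a)+(\Lambda_m-1)a$ and again split via (A5): the first term tends to $1$ because $\tilde s_m\to a$ and $\Lambda_m$ is bounded below, while the second equals $\nu_{a,z}\big(\tfrac{t}{2}|\Lambda_m-1|^{-1}\big)$, which tends to $1$ because $\nu_{a,z}\in\mathcal{D}$ is proper (so $\lim_{u\to+\infty}\nu_{a,z}(u)=1$) and its argument blows up. Hence $\sum\lambda_n x_n=a\in F$, and as the convex series was arbitrary, $F$ is convex series compact. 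The main obstacle---and the only place this argument genuinely departs from the preceding theorem---is the bookkeeping of the scaling factors $\Lambda_m$ in the last two paragraphs: one must verify both that the normalized sequence is Cauchy and that it shares the limit of the unnormalized partial sums, carrying everything through the probabilistic inequalities (A4)--(A5) in place of ordinary norm estimates.
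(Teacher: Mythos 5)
Your proposal is correct and follows essentially the same route as the paper: establish the Cauchy property of the partial sums from boundedness as in the preceding theorem, pass to the normalized partial sums $\Lambda_m^{-1}s_m$ (the paper's $\alpha_n^{-1}y_n$), which lie in $F$ by convexity and are shown to be Cauchy via the same two-term decomposition and (A4)--(A5), then use completeness of $F$ and $\Lambda_m\to 1$ to conclude. Your write-up is somewhat more explicit than the paper's in verifying that the unnormalized sums share the limit of the normalized ones, but the argument is the same.
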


\begin{proof}
Suppose that $\displaystyle\sum^{\infty}_{n=1}\lambda_{n}x_{n}$ is a convex series of elements of $ F $ with $ \lambda_{n}>0 $. By the same procedure in the above theorem $\displaystyle\sum^{\infty}_{n=1}\lambda_{n}x_{n}$ is a Cauchy sequence in $ X $. Take $\alpha_n= \displaystyle\sum_{i=1}^{n} \lambda_{i}$ and $ y_{n}=\displaystyle\sum^{n}_{i=1}\lambda_{i}x_{i} $. We show that $(\alpha^{-1}y_{n})$ is a Cauchy sequence in $F$. Choose $r\in (0,1)$ and $t>0$. Since $F$ is bounded there exists $t_{0}>0$ such that $\nu_{x,y}(t_{0})>1-r$ for every $x \in F$ and $y\in X$. Let $z\in X$ then we have,
\begin{align*}
\nu_{y_{n},z}(t_{0})&= \nu_{\displaystyle\sum^{n}_{i=1}\lambda_{i}x_{i},z}(\alpha_{n}t_{0}))\\
&\geq min \lbrace\nu_{\lambda_{1}x_{1},z}(\lambda_{1}t_{0}),\nu_{\lambda_{2}x_{2},z}(\lambda_{2}t_{0}),...,\nu_{\lambda_{n}x_{n},z}(\lambda_{n}t_{0}))\rbrace\\
&= min\lbrace \nu_{x_{1},z}(t_{0}), \nu_{x_{2},z}(t_{0}),...,\nu_{x_{n},z}(t_{0}) \rbrace\\
& > min\lbrace 1-r, 1-r,..., 1-r\rbrace\\
&= 1-r
\end{align*}

Since $\alpha^{-1}_{n}\rightarrow 1$ and $(y_{n})$ is a Cauchy sequence, there exists $k\in \N$ such that for all $z\in X$
\begin{align*}
\nu_{\alpha^{-1}_{n}y_{n}-\alpha^{-1}_{m}y_{m},z}(t) & = \nu_{\alpha^{-1}_{n}y_{n}-\alpha^{-1}_{m}y_{n}+\alpha^{-1}_{m}y_{n}-\alpha^{-1}_{m}y_{m},z}(\frac{t}{2}+\frac{t}{2})\\
& \geq min\lbrace\nu_{\alpha^{-1}_{n}y_{n}-\alpha^{-1}_{m}y_{n},z}(\frac{t}{2}),\nu_{\alpha^{-1}_{m}y_{n}-\alpha^{-1}_{m}y_{m},z}(\frac{t}{2})\rbrace\\
& = min \lbrace \nu_{y_{n},z}(\frac{t}{2|\alpha^{-1}_{n}-\alpha^{-1}_{m}|}),\nu_{y_{n}-y_{m},z}(\frac{\alpha_{m}t}{2}) \rbrace\\
& > min\lbrace 1-r, 1-r,..., 1-r\rbrace\\
&= 1-r
\end{align*} for every $n,m \geq k$.

Therefore, $(\alpha^{-1}_{n}y_{n})$ is a Cauchy sequence in $F$ and since $F$ is complete, $(\alpha^{-1}_{n}y_{n})$ converges to some $x \in F$. That is,there exists $k\in \N$ such that $\nu_{\alpha^{-1}_{n}y_{n}-x,z}(t)>1-r$ for every $z\in X$ and $n\geq k$ \\implies $\nu_{\alpha^{-1}_{n}\displaystyle\sum^{n}_{i=1}\lambda_{i}x_{i}-x,z}(t)>1-r$ for every $z\in X$ and $n\geq k$ as $n\rightarrow\infty$,\\ we have $\nu_{\displaystyle\sum^{n}_{i=1}\lambda_{i}x_{i}-x,z}(t)=1 $  implies  $ \displaystyle \lim_{n\rightarrow\infty}\nu_{y_{n}-x,z}(t)=1$ implies $y_{n}\rightarrow x$ and $x\in F$. Hence $F$ is convex series compact.
\end{proof}

\subsection{$\mathcal{D}$-Boundedness}
\begin{definition}
Let $A$ be a non empty subset of a Menger's 2-PN space $(X,\nu)$ then its probabilistic radius $R_{A}$ is defined by
$$
R_{A}(x):=\begin{cases} l^{-}\varphi_{A}(x), & x\in [0,+\infty),\\
1,  & x=\infty.\end{cases}
$$
where $\varphi_{A}(t):= \inf \lbrace \nu_{x,y}(t): x,y\in A\rbrace $
\end{definition}
\begin{definition}
Let $A$ be a non empty subset of a Menger's 2-PN space $(X,\nu)$ then $A$ is said to be:
\begin{enumerate}
\item \textit{Certainly bounded}, if $R_{A}(x_{0})=1$ for some $x_{0}\in (0,\infty)$.
\item \textit{Perhaps bounded}, if one has $R_{A}(x)<1$ for every $x\in (0,\infty)$ and $l^{-1}R_{A}(+\infty)=1$.
\item \textit{Perhaps unbounded},if  $R_{A}(x_{0})>0$ for some $x_{0}\in (0,\infty)$ and $l^{-1}R_{A}(+\infty)\in (0,1)$.
\item \textit{Certainly unbounded}, if $l^{-1}R_{A}(+\infty)=0$.
\end{enumerate}

$A$ is said to be $\mathcal{D}$-Bounded if either (1) or (2) holds.
\end{definition}

\begin{theorem} Let $(X,\nu)$ be a Menger's 2-PN space. If $|\alpha|\leq |\beta|$ then $\nu_{\beta x,y}(t)\leq \nu_{\alpha x,y}(t)$ for every $x,y\in X$ and $\alpha , \beta \in \R \setminus \lbrace 0\rbrace$.
\end{theorem}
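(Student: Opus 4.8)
The plan is to collapse both sides of the inequality onto a single distribution function evaluated at two comparable arguments. The only tools required are axiom (A4), namely $\nu_{\gamma x,y}(t)=\nu_{x,y}(t/|\gamma|)$ for $\gamma\in\R\setminus\{0\}$, together with the fact that, by the definition of a distribution function, every member of $\mathcal{D}$ is non-decreasing.

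First I would dispose of the range $t\leq 0$ separately. Since $\nu_{\alpha x,y}$ and $\nu_{\beta x,y}$ both belong to $\mathcal{D}$ and satisfy $\nu_{\alpha x,y}(0)=\nu_{\beta x,y}(0)=0$ by (A1), the non-decrease of these functions forces $\nu_{\alpha x,y}(t)=\nu_{\beta x,y}(t)=0$ for every $t\leq 0$, so the claimed inequality holds trivially (as an equality of zeros) on this range.

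The substantive case is $t>0$. Here I would apply (A4) to each side, writing $\nu_{\beta x,y}(t)=\nu_{x,y}(t/|\beta|)$ and $\nu_{\alpha x,y}(t)=\nu_{x,y}(t/|\alpha|)$; both expressions are legitimate because $\alpha,\beta\neq 0$ keep the denominators strictly positive. The hypothesis $|\alpha|\leq|\beta|$ gives $1/|\beta|\leq 1/|\alpha|$, and multiplying through by $t>0$ yields $t/|\beta|\leq t/|\alpha|$. Invoking the monotonicity of $\nu_{x,y}$ on these two arguments then produces $\nu_{x,y}(t/|\beta|)\leq\nu_{x,y}(t/|\alpha|)$, which is precisely $\nu_{\beta x,y}(t)\leq\nu_{\alpha x,y}(t)$.

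I do not expect a genuine obstacle: the statement is essentially an immediate consequence of (A4) and the definition of a distribution function. The only points deserving care are applying monotonicity in the correct direction (the larger scalar $\beta$ yields the smaller argument $t/|\beta|$, hence the smaller value) and verifying the boundary range $t\leq 0$; both are routine.
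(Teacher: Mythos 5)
Your proposal is correct and follows essentially the same route as the paper's proof: apply (A4) to rewrite both sides as $\nu_{x,y}$ evaluated at $t/|\beta|$ and $t/|\alpha|$, then use monotonicity of the distribution function. The only difference is that you also treat the range $t\leq 0$ explicitly, which the paper omits but which is a harmless (and slightly more careful) addition.
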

\begin{proof}
We have $\nu_{\beta x,y}(t)=\nu_{ x,y}(\frac{t}{|\beta|})$ and $\nu_{\alpha x,y}(t)=\nu_{ x,y}(\frac{t}{|\alpha|})$. Since $|\alpha|\leq |\beta|$ then $\frac{t}{|\beta|}\leq \frac{t}{|\alpha|}\Rightarrow \nu_{ x,y}(\frac{t}{|\beta|})\leq \nu_{ x,y}(\frac{t}{|\alpha|})\Rightarrow \nu_{\beta x,y}(t)\leq \nu_{\alpha x,y}(t)$.
\end{proof}

\begin{theorem} Let $(X,\nu)$  and $A$ be  a Menger's 2-PN space and a nonempty subset respectively, then $A$  is $\mathcal{D}$-bounded if, and only if there exists a d.f $G\in \mathcal{D^{+}}$ such that $\nu_{x,y}\geq G$ for every $x,y\in A$.
\end{theorem}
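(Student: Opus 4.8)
The plan is to reduce the statement to one observation: $A$ is $\mathcal{D}$-bounded precisely when $R_A\in\mathcal{D}^+$, and then to take the minorant $G$ to be $R_A$ itself. Throughout I would use that every $\nu_{x,y}$ lies in $\Delta^+$ (by (A1), $\nu_{x,y}(0)=0$), so that
$$\varphi_A(t)=\inf\{\nu_{x,y}(t):x,y\in A\}$$
is nondecreasing with $\varphi_A(0)=0$, and its left-continuous regularization $R_A(t)=\sup_{s<t}\varphi_A(s)$ again belongs to $\Delta^+$. The single pointwise chain I would lean on is
$$ R_A(t)\le\varphi_A(t)\le\nu_{x,y}(t)\qquad(x,y\in A),$$
where the first inequality holds because $\varphi_A$ is nondecreasing and the second by definition of the infimum.

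For the forward implication I would first record that $\mathcal{D}$-boundedness, i.e.\ case (1) or case (2) of the governing definition, is in each case equivalent to $\lim_{t\to\infty}R_A(t)=1$; together with $R_A\in\Delta^+$ this says exactly that $R_A\in\mathcal{D}^+$. Then I would simply put $G=R_A$. The displayed chain gives $\nu_{x,y}\ge R_A=G$ for all $x,y\in A$, and $G\in\mathcal{D}^+$ by the previous sentence, which is all that is asked.

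For the converse, suppose $G\in\mathcal{D}^+$ satisfies $\nu_{x,y}\ge G$ on $A$. Taking the infimum over $x,y\in A$ yields $\varphi_A\ge G$ pointwise, and passing to left-continuous regularizations (here $G$ is already left-continuous, being a d.f., so $R_A(t)\ge\sup_{s<t}G(s)=G(t)$) gives $R_A\ge G$. Since $G\in\mathcal{D}^+$ forces $\lim_{t\to\infty}G(t)=1$ while $R_A\le1$, I would conclude $\lim_{t\to\infty}R_A(t)=1$, hence $R_A\in\mathcal{D}^+$. Splitting into the two exhaustive cases, namely $R_A(t_0)=1$ for some finite $t_0>0$ (giving (1)) or $R_A(t)<1$ for every finite $t>0$ together with the tail limit (giving (2)), shows that $A$ is $\mathcal{D}$-bounded.

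The step I expect to be the main obstacle is the careful handling of the left-continuous regularization $l^{-}$ hidden in the definition of $R_A$: one must verify both that the minorization $\nu_{x,y}\ge R_A$ survives the passage from $\varphi_A$ to its regularization (which is exactly where monotonicity is used) and that $R_A$ genuinely lands in $\mathcal{D}^+$ rather than merely in $\Delta^+$, the latter amounting to controlling the behaviour at $+\infty$. Everything else is a routine manipulation of infima and of the tail limit.
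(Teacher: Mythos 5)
Your proposal is correct and follows essentially the same route as the paper's proof: in the forward direction you take $G:=R_A$, and in the converse you pass the inequality $\nu_{x,y}\ge G$ through the infimum and the left-continuous regularization to get $R_A\ge G$ and hence $\lim_{t\to\infty}R_A(t)=1$. Your version is merely more careful than the paper's about why $\nu_{x,y}\ge R_A$ (via the chain $R_A\le\varphi_A\le\nu_{x,y}$) and about the handling of $l^-$, which the paper glosses over.
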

\begin{proof}
Suppose that $A$ is $\mathcal{D}$-bounded  then there exists $R_{A}\in \mathcal{D^{+}}$. Choose $G:=R_{A}$ then $\nu_{x,y}\geq G$ for every $x,y\in A$. Conversely, Suppose that there is a d.f $G\in \mathcal{D^{+}}$ such that $\nu_{x,y}\geq G$ for every $x,y\in A$ implies $l^{-1}inf_{x,y\in A}\nu_{x,y}(t)\geq inf G(t)\Rightarrow R_{A}(t)\geq G(t)\Rightarrow \displaystyle \lim_{t\rightarrow\infty}R_{A}(t)\geq \displaystyle \lim_{t\rightarrow\infty}G(t)=1$. So, $A$ is $\mathcal{D}$-bounded.
\end{proof}

We denote the set of all $\mathcal{D}$-bounded subsets in  a  Menger's generalized 2-probabilistic normed  space $(X\times Y,\nu)$ (briefly MG2PN space) by $\mathcal{P}_{\mathcal{D}^+}(X\times Y)$.

\begin{theorem}  Let $(X\times Y,\nu)$ and $A\times B$, $C\times B$ be  a   Menger's G2PN space and two non empty $\mathcal{D}$-bounded subsets of $X\times Y$ respectively.
Then $(A + C)\times B$ is a  $\mathcal{D}$-bounded set if  $\mathcal{D}^+$ is a closed set under the t-norm $ M $, i.e. $M(\mathcal{D}^+\times \mathcal{D}^+)\subseteq \mathcal{D}^+$.
\end{theorem}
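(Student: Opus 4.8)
The plan is to reduce everything to the distribution-function criterion for $\mathcal{D}$-boundedness and then to use the triangle axiom (MG2P-N5) once. First I would record the evident analogue, for MG2PN spaces, of the characterization established in the preceding theorem: a nonempty set $S\times T\subseteq X\times Y$ is $\mathcal{D}$-bounded if and only if there is a d.f. $G\in\mathcal{D}^+$ with $\nu_{p,q}\geq G$ for all $p\in S$ and $q\in T$. The proof is identical to the one given above, with $R_A$ replaced by $R_{S\times T}$: in the forward direction take $G:=R_{S\times T}$, and in the converse compare the limits at $+\infty$ in $R_{S\times T}\geq G$.

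Applying this criterion to the two hypotheses gives $G_1,G_2\in\mathcal{D}^+$ with $\nu_{a,b}\geq G_1$ for all $a\in A$, $b\in B$, and $\nu_{c,b}\geq G_2$ for all $c\in C$, $b\in B$. Now fix a typical element $a+c$ of $A+C$ (with $a\in A$, $c\in C$) and any $b\in B$, and apply (MG2P-N5) with $z=b\in Y$ and first argument split as $a+c$:
$$\nu_{a+c,\,b}\;\geq\;\min\{\nu_{a,b},\,\nu_{c,b}\}\;\geq\;\min\{G_1,G_2\}\;=\;M(G_1,G_2),$$
the middle step being the pointwise comparison of distribution functions and $M$ the minimum t-norm acting pointwise. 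Putting $G:=M(G_1,G_2)$, the hypothesis $M(\mathcal{D}^+\times\mathcal{D}^+)\subseteq\mathcal{D}^+$ ensures $G\in\mathcal{D}^+$, and the display says precisely that $\nu_{p,b}\geq G$ for every $p\in A+C$ and $b\in B$. The criterion then yields that $(A+C)\times B$ is $\mathcal{D}$-bounded.

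The only point deserving care is confirming $G=M(G_1,G_2)\in\mathcal{D}^+$, which is exactly what the closure assumption supplies; it is in any case automatic, since the pointwise minimum of two nondecreasing, left-continuous members of $\mathcal{D}^+$ is again nondecreasing and left-continuous, vanishes at $0$, and tends to $\min\{1,1\}=1$ at $+\infty$. Everything else is a single application of (MG2P-N5) together with the infimum definition of $R_{S\times T}$, so I anticipate no real obstacle; the argument runs exactly parallel to the proof that $\alpha A\times B$ is $\mathcal{D}$-bounded.
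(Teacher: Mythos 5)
Your proof is correct and follows essentially the same route as the paper's: a single application of axiom (MG2P-N5) gives $\nu_{a+c,b}\geq M\{\nu_{a,b},\nu_{c,b}\}\geq M\{R_{A\times B},R_{C\times B}\}$, and the closure of $\mathcal{D}^+$ under $M$ finishes the argument; your passage through the explicit lower-bound criterion with $G_1,G_2$ is just the same computation with the probabilistic radii replaced by equivalent d.f.\ minorants. Your parenthetical observation that $M(G_1,G_2)\in\mathcal{D}^+$ holds automatically for the minimum t-norm is accurate and, if anything, sharper than the paper, which keeps the closure hypothesis explicit.
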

\begin{proof} For every $(a,b)\in A\times B$ and $(c,b)\in C\times B$ one has $(a + c, b)\in (A + C)\times B$. Therefore
$$\nu_{a + c, b}\geq M\{\nu_{a,b}, \nu_{c,b}\}\geq M\{\nu_{a,b}, R_{C\times B}\}\geq M\{R_{A\times B},  R_{C\times B} \},$$
and as a consequence $$R_{ (A + C)\times B}\geq M \{R_{A\times B},  R_{C\times B} \}.$$ According to hypothesis one has $$ M \{R_{A\times B},  R_{C\times B} \}\subseteq
\mathcal{D}^+,$$
and finally $\ell^-R_{ (A + C)\times B}(+\infty)=1$.
\end{proof}

\begin{theorem}  Let $(X\times Y,\nu)$ and $A\times B$, $C\times D$, $A\times D$, $C\times B$ be  a   Menger's G2PN space and four non empty $\mathcal{D}$-bounded subsets of $X\times Y$ respectively.
Then the set given by $$ A\times B + C\times D:=\{(p,q) + (r,s) = (p + r, q + s)\} $$
 is  $\mathcal{D}$-bounded if  $\mathcal{D}^+$ is a closed set under the t-norm $ M $.
\end{theorem}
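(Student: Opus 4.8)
The plan is to adapt the argument used in the preceding theorem, now splitting \emph{both} coordinates rather than just the first one. Fix an arbitrary element $(p + r, q + s)$ of $A\times B + C\times D$, where $p\in A$, $q\in B$, $r\in C$ and $s\in D$. The goal is to bound $\nu_{p + r, q + s}$ from below by a distribution function lying in $\mathcal{D}^+$, uniformly in the choice of $p,q,r,s$; passing to the infimum then controls the probabilistic radius $R_{A\times B + C\times D}$ and settles $\mathcal{D}$-boundedness.

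First I would apply axiom (MG2P-N6) to break up the second coordinate, obtaining
$$\nu_{p + r, q + s}\geq M\{\nu_{p + r, q},\, \nu_{p + r, s}\}.$$
Next I would apply axiom (MG2P-N5) to each of the two terms on the right to break up the first coordinate, giving $\nu_{p + r, q}\geq M\{\nu_{p,q}, \nu_{r,q}\}$ and $\nu_{p + r, s}\geq M\{\nu_{p,s}, \nu_{r,s}\}$. Since $M$ is the minimum and hence associative and monotone, combining these yields
$$\nu_{p + r, q + s}\geq M\{\nu_{p,q},\, \nu_{r,q},\, \nu_{p,s},\, \nu_{r,s}\}.$$
Now each pair sits in one of the four given sets: $(p,q)\in A\times B$, $(r,q)\in C\times B$, $(p,s)\in A\times D$ and $(r,s)\in C\times D$, so by the definition of the probabilistic radius one has $\nu_{p,q}\geq R_{A\times B}$, $\nu_{r,q}\geq R_{C\times B}$, $\nu_{p,s}\geq R_{A\times D}$ and $\nu_{r,s}\geq R_{C\times D}$. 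Monotonicity of $M$ then gives
$$\nu_{p + r, q + s}\geq M\{R_{A\times B},\, R_{C\times B},\, R_{A\times D},\, R_{C\times D}\}.$$

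Because this lower bound is independent of $p,q,r,s$, taking the infimum over all elements of $A\times B + C\times D$ yields $R_{A\times B + C\times D}\geq M\{R_{A\times B}, R_{C\times B}, R_{A\times D}, R_{C\times D}\}$. By hypothesis each of the four sets is $\mathcal{D}$-bounded, so each of the four radii lies in $\mathcal{D}^+$; since $\mathcal{D}^+$ is closed under $M$ and $M$ is associative, the iterated minimum on the right also lies in $\mathcal{D}^+$, whence $\ell^- R_{A\times B + C\times D}(+\infty) = 1$, which is exactly the condition for $A\times B + C\times D$ to be $\mathcal{D}$-bounded. I do not expect a serious obstacle, as everything reduces to the two subadditivity axioms and monotonicity of $M$. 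The only point deserving care — and the reason all four sets $A\times B$, $C\times B$, $A\times D$, $C\times D$ (rather than just two) must be assumed $\mathcal{D}$-bounded — is that splitting both coordinates necessarily produces the four mixed pairs $(p,q),(r,q),(p,s),(r,s)$, so all four radii must be proper d.f.'s for the closure-under-$M$ step to deliver a member of $\mathcal{D}^+$.
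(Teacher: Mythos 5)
Your proof is correct, but it takes a genuinely different route from the paper's. The paper splits only the \emph{first} coordinate, using (MG2P-N5) to get $\nu_{p+r,\,q+s}\geq M\{\nu_{p,\,q+s},\nu_{r,\,q+s}\}\geq M\{R_{A\times(B+D)},R_{C\times(B+D)}\}$, and hence $R_{A\times B+C\times D}\geq M\{R_{A\times(B+D)},R_{C\times(B+D)}\}$; it then stops, implicitly relying on the preceding theorem (the $(A+C)\times B$ result, transposed to the second coordinate) to guarantee that $A\times(B+D)$ and $C\times(B+D)$ are $\mathcal{D}$-bounded --- which is where the hypotheses on $A\times D$ and $C\times B$ silently enter. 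You instead split \emph{both} coordinates, via (MG2P-N6) and then (MG2P-N5), arriving at the four-fold bound $\nu_{p+r,\,q+s}\geq M\{R_{A\times B},R_{C\times B},R_{A\times D},R_{C\times D}\}$ directly in terms of the four hypothesized radii. Your version is more self-contained (it does not lean on the previous theorem), it makes explicit why exactly those four sets must be assumed $\mathcal{D}$-bounded, and it actually carries the argument through to the conclusion $\ell^-R_{A\times B+C\times D}(+\infty)=1$, a step the paper's proof omits. The paper's version is shorter and modular but, as printed, leaves unjustified the membership of $R_{A\times(B+D)}$ and $R_{C\times(B+D)}$ in $\mathcal{D}^+$.
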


\begin{proof} By ( MG2PN-3) one has, for all $(p,q)\in A\times B$, $(r,s)\in C\times D$,
$$\nu_{(p,q) + (r,s)}\geq M\{\nu_{p, q+s}, \nu_{r, q+s}\}\geq M\{R_{A\times(B + D)}, \nu_{r, q + s}\}\geq M\{R_{A\times(B + D)}, R_{C\times(B + D)}\},$$
and as a consequence
$$R_{A\times B + C\times D}\geq M\{R_{A\times(B + D)}, R_{C\times(B + D)}\}.$$
\end{proof}

\textbf{Acknowledgment:} The first author is thankful to NBHM (National Board of Higher Mathematics), Department of Atomic Energy (DAE), Govt. of India for the partial financial support and Universidad de Almer\'{\i}a, Spain for the sponsorship to visit and do this collaborative work with Prof. Bernardo Lafuerza--Guill\'{e}n. Also,the first author is thankful to MIT, Manipal University for their kind encouragement.
%%% ENTER REFERENCES IN THE FORM

\end{document}